\title{Relative entropy and the stability of shocks and contact discontinuities for systems of conservation laws with non $BV$ perturbations}
\author{Nicholas {\sc Leger} and Alexis {\sc Vasseur} \\ \small Department of Mathematics \\ \small University of Texas at Austin}
\newlength{\hchng}
\newlength{\vchng}
\newtheorem{theo}{Theorem}[section]
\newtheorem{prop}{Proposition}
\newtheorem{corr}{Corollary}
\newtheorem{lemm}{Lemma}
\newtheorem{defi}{Definition}
\newcommand{\ds}{\displaystyle}
\newcommand{\R}{\mathbb R}
\newcommand{\eps}{\varepsilon}
\newcommand{\dt}{\partial_t}
\newcommand{\dx}{\partial_x}
\newcommand{\mV}{\mathcal{V}}
\newcommand{\mI}{\mathcal{I}}
\newcommand{\mU}{\mathcal{U}}
\newcommand{\Vs}{V_{\mathrm{max}}}
\newcommand{\Vi}{V_{\mathrm{min}}}
\newcommand{\sU}{{U_{\pm}}}
\newcommand{\midd}{\thinspace\vert\thinspace}
\DeclareMathOperator*{\esssup}{ess\,sup}
\begin{document}
\maketitle
\bibliographystyle{plain}

\noindent{\bf Abstract:}  We develop a theory based on relative entropy to show the uniqueness and $L^2$ stability (up to a translation) of extremal entropic Rankine-Hugoniot discontinuities for systems of conservation laws (typically 1-shocks, n-shocks, 1-contact discontinuities and n-contact discontinuities of large amplitude) among bounded entropic weak solutions having an additional trace property. The existence of a convex entropy is needed. 
No $BV$ estimate is needed on the weak solutions considered. 
The theory holds without smallness condition.
The assumptions are quite general. For instance, strict hyperbolicity is not needed globally. For fluid mechanics, the theory handles solutions with vacuum.   

\vskip0.3cm \noindent {\bf Keywords:}
System of conservation laws, compressible Euler equation, Rankine-Hugoniot discontinuity, shock, contact discontinuity, relative entropy, stability, uniqueness. 

\vskip0.3cm \noindent {\bf Mathematics Subject Classification:}
35L65, 35L67, 35B35.

\section{Introduction}

In this article, we develop a theory for uniqueness and global $L^2$ stability of extremal entropy-admissible Rankine-Hugoniot discontinuities (typically 1-shocks, n-shocks, 1-contact discontinuities and n-contact discontinuities)
for a wide class of systems of conservation laws endowed with a convex entropy. The uniqueness and stability is shown in the class of bounded weak entropic solutions verifying the following trace property.
\begin{defi}\label{defi_trace}
Let $U\in L^\infty(\R^+\times\R)$. We say that $U$ verifies the strong trace property if for any Lipschitzian curve $t\to X(t)$, there exists two bounded functions $U_-,U_+\in L^\infty(\R^+)$ such that for any $T>0$
$$
\lim_{n\to\infty}\int_0^T\sup_{y\in(0,1/n)}|U(t,x(t)+y)-U_+(t)|\,dt=\lim_{n\to\infty}\int_0^T\sup_{y\in(-1/n,0)}|U(t,x(t)+y)-U_-(t)|\,dt=0.
$$
\end{defi}
Note that, for each fixed curve,  this is equivalent to the convergence for almost every time $t$.
Obviously, any $BV$ function verifies this strong trace property.
But this requirement is weaker than the $BV$ property.
Let us emphasize that this notion of trace is more restrictive than the strong trace introduced in \cite{Vasseur_trace}, which is 
 known to be verified for bounded solutions of scalar conservation laws. 
This has been shown in the multidimensional case, first with a non-degeneracy property, in \cite{Vasseur_trace}. In the one-dimensional case, a different proof based on compensated compactness was proposed by  Chen and Rascle \cite{Chen_trace}.
For a general flux function the strong trace problem has been solved in the 1D case in \cite{KV}. The general multidimensional case has been obtained by Panov \cite{Panov,Panov2} (see also Kwon \cite{Kwon},  De Lellis,  Otto,  and Westdickenberg \cite{DeLellis} for interesting generalizations). In the case of systems, this is mainly an open problem. This has been shown only for the particular case of isentropic gas dynamics with $\gamma=3$ for traces in time (traces in space can be shown the same way) in \cite{Vasseur_gamma3}. 
Unfortunately, there are no such results for the strong  trace property of Definition \ref{defi_trace} outside of the usual $BV$ theory.
\vskip0.3cm
Stability of shocks in the class of $BV$ solutions has been investigated by a number of authors. In the case of small perturbations in $L^\infty\cap BV$, Bressan, Crasta, and Piccoli \cite{Bressan1} developed a powerful theory of $L^1$ stability for entropy solutions obtained by either the Glimm scheme \cite{Glimm} or the wave front-tracking method.  A simplified approach has been proposed by Bressan, Liu, and Yang \cite{Bressan2} and Liu and Yang \cite{liu}. (See also \cite{Bressan}.) The theory also works in some cases for small perturbations in $L^\infty\cap BV$ of large shocks. See, for instance, Lewicka and Trivisa \cite{Lewicka} or Bressan and Colombo \cite{Bressan3}.  
 \vskip0.3cm
However, our stability result goes beyond the known results valid  in the class of $BV$ solutions, with perturbations small in $BV$. 
Our approach is based on the relative entropy method first used by Dafermos and DiPerna to show $L^2$ stability and uniqueness of Lipschitzian solutions to conservation laws
\cite{Dafermos4,Dafermos1,DiPerna}. Note that in \cite{DiPerna}, uniqueness of small shocks for strictly hyperbolic $2\times 2$ systems is shown in a class of admissible weak solutions with small oscillation in $L^\infty \cap BV$. The analysis in \cite{DiPerna} also implies the uniqueness of shocks for $2 \times 2$ systems in the Smoller-Johnson class \cite{Smoller}. In each case genuine nonlinearity is assumed. The ideas of DiPerna were developed further by Chen and Frid in the papers \cite{Frid2,Frid1}. In subsequent work, they established, together with Li \cite{Chen1}, 
the uniqueness of solutions to the Riemann problem in a large class of entropy solutions (locally $BV$ without smallness conditions) for the $3\times 3$ Euler system in Lagrangian coordinates. They also establish a large-time stability result in this context. See also Chen and Li \cite{Chen_Li} for an extension to the relativistic Euler equations. However, no stability in $L^2$ for all time is included in those results. 
\vskip0.3cm 
Our approach is based on fairly mild assumptions on the system and the Rankine-Hugoniot discontinuity.
Basically, we need the discontinuity to be extremal (1-shock or n-shock and well separated from the other Hugoniot discontinuities), to be either a contact discontinuity or verify the Liu condition.
In the case of a Liu shock, the corresponding shock curve should satisfy the Liu property everywhere (so that the shock speed varies monotonically along the curve), and we need also a property of growth of the strength of the shock along the shock curve, where the strength is measured via the entropy. Very little constraint is needed on the other shock families. Lax properties are typically enough. But we may even relax it
to cases where the system is neither genuinely nonlinear nor strictly hyperbolic, and even to cases where the shock curves are not well-defined. The theory works fine even for large shocks.  Note that the present study is another step in the program described in \cite{Vasseur_Book}. A first step was achieved for scalar conservation laws in \cite{Leger}.
\vskip0.3cm 
 
We will give a precise description of our hypotheses and main results in the next section. First let us mention a few particular cases in which our theory applies. Our first examples include the isentropic Euler system and the full Euler system for a polytropic gas. Both systems are treated in Eulerian coordinates. The isentropic Euler system is the following.
\begin{equation}\label{isentropic}
\left\{\begin{array}{l}
\ds{\dt \rho+\dx (\rho u)=0}\\[0.2cm]
\ds{\dt (\rho u )+\dx (\rho u^2+ P(\rho))=0}.
\end{array}\right.
\end{equation}
We assume a smooth pressure law $P: \R^+ \to \R$ with the following properties
\begin{align}\label{isentropic2}
P^{\prime}(\rho)>0, \qquad \quad [\rho P(\rho)]^{\prime \prime} \geq 0.
\end{align}
As usual, we consider only entropic solutions of this system, namely, those verifying additionally the entropy inequality:
$$
\dt \eta(\rho,\rho u)+\dx G(\rho, \rho u) \leq 0,
$$
with
 $$
 \eta(\rho, \rho u)=\frac{(\rho u)^2}{2 \rho} + S(\rho), \qquad G(\rho, \rho u) = \frac{(\rho u)^3}{2 \rho^2} + \rho u \thinspace S^{\prime}(\rho),
 $$
and with $S^{\prime \prime}(\rho) = \rho^{-1} P^{\prime}(\rho) > 0$. Note that we need only a single convex entropy, even if in this case there exists an entire family of convex entropies. 
\vskip0.3cm

The full Euler system reads
\begin{equation}\label{Euler}
\left\{\begin{array}{l}
\ds{\dt \rho+\dx (\rho u)=0}\\[0.3cm]
\ds{\dt (\rho u )+\dx (\rho u^2+ P)=0}\\[0.3cm]
\ds{\dt (\rho E )+\dx (\rho u E + uP)=0,}
\end{array}\right.
\end{equation}
where $E = \frac{1}{2}u^2 + e$. The equation of state for a polytropic gas is given by 
\begin{equation}\label{polytropic}
P = (\gamma-1) \rho e
\end{equation}
where $\gamma >1$. 
In that case, we consider the entropy/entropy-flux pair
\begin{equation}\label{Eulerconvex}
\eta(\rho, \rho u, \rho E) = (\gamma -1) \rho \ln \rho - \rho \ln e, \qquad G(\rho, \rho u, \rho E) = (\gamma -1) \rho u \ln \rho - \rho u \ln e,
\end{equation}
where, in conservative variables, we have $e = \ds{\frac{\rho E}{\rho} - \frac{(\rho u)^2}{2 \rho^2}}$. \\

For the Euler systems (\ref{isentropic}) and (\ref{Euler}), we have the following theorem.

\begin{theo}\label{theoEuler}
Consider a  shock $(U_L,U_R)=((\rho_L,u_L),(\rho_R,u_R))$ with velocity $\sigma$ associated  to the system (\ref{isentropic})-(\ref{isentropic2}), (resp. $(U_L,U_R)=((\rho_L,u_L,E_L),(\rho_R,u_R,E_R))$ associated  to the system (\ref{Euler})-(\ref{polytropic})). For any $K>0$, there exists $C_K>0$ with the following property. For any $0<\eps<1$ we have the two following cases:
\begin{itemize}
\item If $(U_L,U_R)$ is a 1-shock, then for any weak entropic solution $U=(\rho,u)\in L^\infty(\R^+ \times\R)$ of (\ref{isentropic}) (resp.  $U=(\rho, u,E)\in L^\infty(\R^+ \times \R)$ of (\ref{Euler})) 
verifying the strong trace property of Definition \ref{defi_trace} and such that $\|(\rho, u)\|_{L^\infty}\leq K$, (resp. $\|(\rho, u,E)\|_{L^\infty}\leq K$) and
\begin{eqnarray*}
\int_0^\infty |U_0(x)- U_R|^2\,dx\leq \eps,\qquad  
\int_{-\infty}^0|U_0(x)-U_L|^2\,dx\leq \eps^4,
\end{eqnarray*}
then there exists a Lipschtiz path $x(t)$ such that for any $T>0$ we have both
\begin{eqnarray*}
\int_0^\infty |U(T,x+x(T))- U_R|^2\,dx\leq C\eps(1+T),\qquad
\int_{-\infty}^0|U(T,x+x(T))- U_L|^2\,dx\leq \eps^4.
\end{eqnarray*}
\item If $(U_L,U_R)$ is a n-shock, then for any weak entropic solution $U\in L^\infty(\R^+\times \R)$ of (\ref{isentropic}) (resp. of (\ref{Euler})) 
verifying the strong trace property of Definition \ref{defi_trace} and such that $\|(\rho,u)\|_{L^\infty}\leq K$ (resp. $\|(\rho,u,E)\|_{L^\infty}\leq K$), and
\begin{eqnarray*}
\int_0^\infty |U_0(x)- U_R|^2\,dx\leq \eps^4,\qquad 
\int_{-\infty}^0|U_0(x)- U_L|^2\,dx\leq \eps,
\end{eqnarray*}
then there exists a Lipschtiz path $x(t)$ such that for any $T>0$ we have
\begin{eqnarray*}
\int_0^\infty |U(T,x+x(T)- U_R|^2\,dx\leq \eps^4,\qquad
\int_{-\infty}^0|U(T,x+x(T))- U_L|^2\,dx\leq C\eps(1+T).
\end{eqnarray*}
\end{itemize}
In both case we have 
$$
|x(t)-\sigma t|\leq C\sqrt{\eps t(1+t)}.
$$
\end{theo}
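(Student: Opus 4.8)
The plan is to treat Theorem \ref{theoEuler} as an application of the relative entropy method with a dynamically constructed shift, the point being that once the interface dissipation is controlled, the two $L^2$ estimates follow from a single Grönwall inequality. First I would recall the relative entropy and relative flux attached to the convex pair $(\eta,G)$ of (\ref{Eulerconvex}) (resp.\ the isentropic pair): for states $U,V$ in the compact set $\{\|U\|_{L^\infty}\le K\}$, set
$$
\eta(U\midd V)=\eta(U)-\eta(V)-D\eta(V)\cdot(U-V),\qquad F(U;V)=G(U)-G(V)-D\eta(V)\cdot\bigl(f(U)-f(V)\bigr),
$$
where $f$ is the flux of the system. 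Strict convexity of $\eta$ on the relevant bounded set yields $c_K|U-V|^2\le\eta(U\midd V)\le C_K|U-V|^2$, so that controlling the relative entropy is equivalent to controlling the $L^2$ distance; this is what converts the entropy estimate into the stated $L^2$ bounds and is also where the restriction $\|U\|_{L^\infty}\le K$ enters.

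Next I would introduce the weighted functional measuring the distance of $U$ to the shifted shock. Writing $h(t)=x(t)$ for the Lipschitz interface to be constructed, set
$$
\mathcal{E}(t)=\int_{-\infty}^{h(t)}\eta\bigl(U(t,x)\midd U_L\bigr)\,dx+\int_{h(t)}^{\infty}\eta\bigl(U(t,x)\midd U_R\bigr)\,dx.
$$
Since $U_L$ and $U_R$ are constants, each is a stationary solution, and the entropy inequality for $U$ gives the distributional bound $\dt\eta(U\midd U_\pm)+\dx F(U;U_\pm)\le 0$ (indeed $\dt\eta(U\midd U_\pm)+\dx F(U;U_\pm)=\dt\eta(U)+\dx G(U)\le0$, because the cross terms annihilate against the conservation law). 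Differentiating $\mathcal{E}$ and using the strong trace property of Definition \ref{defi_trace} to give meaning to the one-sided limits $U_\mp(t)$ at $x=h(t)$, I would obtain $\tfrac{d}{dt}\mathcal{E}(t)\le D(t)+\mathcal{R}(t)$, where $\mathcal{R}(t)=F(-\infty)-F(\infty)$ collects the far-field fluxes controlled by the behaviour of $U_0$, and the interface term is
$$
D(t)=h'(t)\bigl[\eta(U_-\midd U_L)-\eta(U_+\midd U_R)\bigr]-F(U_-;U_L)+F(U_+;U_R).
$$

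The heart of the proof — and what I expect to be the main obstacle — is the construction of the shift, i.e.\ a choice of $h'(t)$ rendering $D(t)\le0$ for the given traces. I would formulate this as a pointwise lemma: for an extremal ($1$- or $n$-) shock satisfying the Liu condition, the map $s\mapsto D(s;U_-,U_+)$ admits, for every admissible pair of traces in $\{\|U\|\le K\}$, a value $s=\Sigma(U_-,U_+)$ with $D\le0$ and $|\Sigma-\sigma|$ bounded; one then defines $h$ by the ODE $h'(t)=\Sigma(U_-(t),U_+(t))$, $h(0)=0$. This is exactly where the structural hypotheses enter: monotonicity of the shock speed along the shock curve (the Liu property holding everywhere), growth of the entropy-measured strength, and the separation of the extremal family from the others are what make $D$ bounded above by zero. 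For the two Euler systems this reduces to an explicit but delicate verification using (\ref{isentropic2}) and (\ref{polytropic}), including a careful treatment of the vacuum/degenerate boundary of the state space; I expect this verification to be the technically heaviest part. A further subtlety is that $\Sigma$ must be chosen so that the left-hand contribution to $\mathcal{E}$ is \emph{separately} nonincreasing, which is precisely the extremality feature responsible for the asymmetry below.

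Finally I would close the estimate by Grönwall. Because the shift is arranged so that the left integral does not grow, its $\eps^4$ bound is simply propagated, while the entire remainder $\mathcal{R}(t)\le C\eps$ — reflecting the far-field flux and the fact that the single-shock comparison cannot dissipate the faster, non-extremal families present on the right — is absorbed by the right integral, giving $\int_0^\infty\eta(U(T,\cdot+x(T))\midd U_R)\lesssim\mathcal{E}(0)+C\eps T\le C\eps(1+T)$. For the path estimate, the dissipation built into $D$ should also control $\int_0^t|h'(s)-\sigma|^2\,ds\lesssim\eps(1+t)$, whence by Cauchy--Schwarz
$$
|x(t)-\sigma t|\le\sqrt{t}\,\Bigl(\int_0^t|h'(s)-\sigma|^2\,ds\Bigr)^{1/2}\le C\sqrt{\eps\,t(1+t)}.
$$
The $n$-shock case follows by the reflection $x\mapsto-x$, which exchanges the roles of $U_L$ and $U_R$ and of the two far-field integrals.
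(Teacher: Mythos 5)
Your proposal has the right general flavor (relative entropy plus a constructed shift), but the step on which everything hinges is not correct as formulated, and it is not how the paper's argument works. You posit a pointwise lemma producing, for \emph{every} admissible pair of traces, a speed $\Sigma(U_-,U_+)$ making the interface dissipation $D(t)\le 0$. This is both circular and false. Circular, because the traces $U_\pm(t)$ are taken along the unknown curve $h$ of a merely $L^\infty$ solution, so the ODE $h'=\Sigma(U_-(t),U_+(t))$ cannot be solved classically; the paper instead builds $x(t)$ as a limit of Carath\'eodory solutions of mollified ODEs (Proposition \ref{x(t)}), with a velocity functional $V(U)=\min\{F(U,U_L)/\eta(U\midd U_L)-\eps,\ \lambda^-(U)-\eps\}$ depending on the \emph{left} reference state only, and proves a Filippov-type inclusion $\Vi(t)\le x'(t)\le\Vs(t)$ a.e. False, because at a genuine discontinuity of $U$ the speed is not yours to choose: for a.e.\ $t$ the Rankine--Hugoniot relation $A(U_+)-A(U_-)=x'(t)(U_+-U_-)$ holds (Lemma \ref{Dafermos}), so the interface velocity is dictated by the solution. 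Moreover the combined functional $\mathcal{E}$ is genuinely \emph{not} non-increasing: the $C\eps(1+T)$ growth in the conclusion comes precisely from the fact that the right interface term $F(U_+,U_R)-x'\eta(U_+\midd U_R)$ can only be bounded by $C\eps$, and only at times when the left trace is $O(\eps)$-close to $U_L$ (via the DiPerna-type Lemma \ref{cornerstone} together with (H2)--(H3) and the a.e.\ RH relation). At the remaining "bad" times $\mI=\{t:\eta(U(t,x(t)-)\midd U_L)\ge\eps^2\}$ nothing better than boundedness is available; the paper controls them by showing the left relative entropy decreases at rate $\eps^3$ there, so $|\mI|\le\eps^4/\eps^3=\eps$ --- this is exactly where the asymmetric hypothesis $\eps^4$ (left) versus $\eps$ (right) is used. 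Your proposal never invokes the a.e.\ RH structure of the traces, never isolates a bad set, and never uses the $\eps^4/\eps$ asymmetry, so the claimed sign $D\le0$ and the Gr\"onwall closure do not go through.

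Two further gaps. First, the drift estimate: you assert that the dissipation controls $\int_0^t|h'-\sigma|^2\,ds\lesssim\eps(1+t)$, but nothing in the construction gives this; indeed the constructed speed is pinned near $\lambda^-(U_L)-\eps$ or $F/\eta-\eps$ rather than near $\sigma$, and for a large shock these differ by $O(1)$. The paper obtains $|x(t)-\sigma t|\le C\sqrt{\eps t(1+t)}$ by an entirely different route: integrating the conservation law against a cutoff test function $\phi$, which yields $(x(t)-\sigma t)(U_L-U_R)$ up to errors controlled by the already-established $L^2$ bounds, then dividing by $|U_L-U_R|$. Second, since the statement under review is Theorem \ref{theoEuler}, the substance of the paper's proof (Section 6) is the verification of the structural hypotheses (H1)--(H3) and (H$'$1)--(H$'$3) for the two Euler systems: explicit shock curves $S_U(s)$, $\sigma_U(s)$, equivalence of the Liu condition with $[\rho P(\rho)]''\ge0$, the monotonicity $\frac{d}{ds}\eta(U\midd S_U(s))\ge0$ (property (H1b), done by explicit computation, e.g.\ with $h(x)=x\ln x$ for full Euler), the Lax conditions giving (H2)--(H3), and the treatment of the vacuum point of $\mU^0$. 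You defer all of this as "explicit but delicate"; without it, the theorem for these specific systems is not proved.
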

Note that this implies both the uniqueness and the stability of the shocks. The solutions are not assumed to be away from vacuum. This gives a $L^2$ stability results up to the translation $x(t)$. It is worth noting that the treatment of the entropy on the left and the right of the shock is not the same.
For example, in the case of a 1-shock, we will show that the total relative entropy on the left is strictly decreasing. Hence, if at $t=0$, $U^0(x)=U_L$ for $x<0$, then it stays that way on the left 
of $x(t)$ (while keeping the relative entropy on the right under control). The idea here comes from \cite{Vasseur_shock}, where a similar stability appeared in the study of a semi-discrete shock for an isentropic gas with $\gamma=3$. For $2 \times 2$ systems, all Rankine-Hugoniot discontinuities are extremal, hence Theorem \ref{theoEuler} applies to all admissible shocks of (\ref{isentropic}). (The theorem also holds for contact discontinuities in this case, although the pressure laws are usually nonphysical.) For the full Euler system, 
all shocks are 1-shocks or n-shocks (3-shocks in this case), so again the result applies to any entropy admissible shock. However, our result does not provide the stability of contact discontinuities for this system. The problem is that the contact discontinuities correspond to 2-waves (with a middle eigenvalue). Note that in the isentropic case with $P(\rho) = \rho^{\gamma}$ ($\gamma > 1$), it is enough to assume that the initial values are bounded since solutions can be constructed conserving this property (see Chen \cite{Chen3}, or Lions Perthame Tadmor \cite{LPT}, for instance). 
\vskip0.3cm
We now show an application of our method in the general setting of strictly hyperbolic conservation laws with either linearly degenerate or genuinely nonlinear characteristic fields.
We consider an $n \times n$ system of conservation laws
\begin{align}\label{cl1}
\dt U+\dx A(U)=0,
\end{align}
which has a strictly convex entropy $\eta$. Assume that $A$ and $\eta$ are of class $C^2$ on an open state domain $\Omega \subset \R^n$.
We have the following result.

\begin{theo}\label{theoHyper}
Assume that the smallest (resp. largest) eigenvalue of $\nabla A(V)$ is simple for all $V \in \Omega$, and that the corresponding 1-characteristic family (resp. n-characteristic family) of (\ref{cl1}) is either genuinely nonlinear or linearly degenerate. Then, for any $V_0 \in \Omega$, there exists $K>0$ and $C>0$ such that, for any entropy-admissible 1-shock or 1-contact discontinuity (resp. n-shock or n-contact discontinuity) with speed $\sigma$ and endstates $(U_L, U_R)$ verifying $U_L \in B_K(V_0)$ and $U_R \in B_K(V_0)$, the following is true.
For $\eps>0$ small enough, and for any weak entropic solution $U$ bounded in $B_K(V_0)$ on $(0,T)$ such that
\begin{eqnarray*}
\int_0^\infty |U_0(x)-U_R|^2\,dx\leq \eps,\qquad
\int_{-\infty}^0 |U_0(x)-U_L|^2\,dx\leq \eps^4,
\end{eqnarray*}
(resp. $\int_0^\infty |U_0(x)-U_R|^2\,dx\leq \eps^4$ and $\int_{-\infty}^0 |U_0(x)-U_L|^2\,dx\leq \eps$), there exists a Lipschitzian curve $x(t)$ such that, for any $0<t<T$, we have both
\begin{eqnarray*}
\int_0^\infty |U(t,x+x(t))- U_R|^2\,dx\leq C\eps(1+t),\qquad
\int_{-\infty}^0 |U(t,x+x(t))- U_L|^2\,dx\leq \eps^4,
\end{eqnarray*}
(resp. $\int_0^\infty |U(t,x+x(t))- U_R|^2\,dx\leq \eps^4 $ and $\int_{-\infty}^0 |U(t,x+x(t))- U_L|^2\,dx\leq C\eps(1+t)$).
In both cases we have 
$$
|x(t)-\sigma t|\leq C\sqrt{\eps t(1+t)}.
$$
\end{theo}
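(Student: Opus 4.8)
\emph{Sketch of the approach.} The plan is to run the relative entropy method with the asymmetric, moving-front device announced after Theorem~\ref{theoEuler}, localizing everything to the ball $B_K(V_0)$; by the change of variables $x\mapsto -x$ it is enough to treat the $1$-shock (resp.\ $1$-contact) case. Since $\eta$ is strictly convex and of class $C^2$, on the compact set $\overline{B_K(V_0)}$ there are constants $0<c_K\le C_K$ with
\begin{equation*}
c_K\,|U-V|^2\;\le\;\eta(U\midd V):=\eta(U)-\eta(V)-\nabla\eta(V)\cdot(U-V)\;\le\;C_K\,|U-V|^2,
\end{equation*}
so each $L^2$ integral in the statement is, up to these fixed constants, an integrated relative entropy, and it suffices to control the latter. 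I would also record the relative flux $G(U\midd V)=G(U)-G(V)-\nabla\eta(V)\cdot(A(U)-A(V))$ and note that, because the entropy pair satisfies $\nabla G=\nabla\eta\,\nabla A$, both $\eta(U\midd V)$ and $G(U\midd V)$ vanish to \emph{second} order in $U-V$; this quadratic structure is ultimately what makes the dissipation at the front small.

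First I would fix a Lipschitz front $t\mapsto x(t)$, to be constructed, and split
\begin{equation*}
E_-(t)=\int_{-\infty}^{x(t)}\eta\big(U(t,x)\midd U_L\big)\,dx,\qquad E_+(t)=\int_{x(t)}^{+\infty}\eta\big(U(t,x)\midd U_R\big)\,dx.
\end{equation*}
Testing the entropy inequality for $U$ against the constant vectors $\nabla\eta(U_L)$ and $\nabla\eta(U_R)$ and combining with the conservation law yields the distributional inequalities $\dt\eta(U\midd U_L)+\dx G(U\midd U_L)\le0$ and $\dt\eta(U\midd U_R)+\dx G(U\midd U_R)\le0$. Integrating each over the corresponding half-line, invoking the strong trace property of Definition~\ref{defi_trace} to give meaning to the one-sided traces $U_\pm(t)=U(t,x(t)^\pm)$, and using the $L^2$ integrability to kill the flux contributions at $\mp\infty$, I obtain
\begin{equation*}
\frac{d}{dt}E_-(t)\le \dot x(t)\,\eta(U_-\midd U_L)-G(U_-\midd U_L),\qquad \frac{d}{dt}E_+(t)\le -\dot x(t)\,\eta(U_+\midd U_R)+G(U_+\midd U_R).
\end{equation*}
Everything is thereby reduced to a pointwise-in-time question at the front: choosing the scalar $\dot x(t)$, as a function of the two traces, so as to sign and control these boundary terms.

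The heart of the matter, and the step I expect to be the main obstacle, is the geometric dissipation lemma. Writing $f_-=\dot x\,\eta(U_-\midd U_L)-G(U_-\midd U_L)$, the threshold velocity $\Vs(U_-)=G(U_-\midd U_L)/\eta(U_-\midd U_L)$ has the property that $\dot x\le\Vs(U_-)$ forces $f_-\le0$, and symmetrically $\Vi(U_+)$ governs the right term $f_+=-\dot x\,\eta(U_+\midd U_R)+G(U_+\midd U_R)$. The structural hypotheses---simplicity and extremality of the $1$-field, genuine nonlinearity or linear degeneracy, and entropy admissibility (the Liu condition in the shock case)---are exactly what I need in order to prove that, for all traces in $B_K(V_0)$, one may pick $\dot x(t)$ keeping $f_-\le0$ while bounding $f_+$ by $C$ times the quadratic strength of the perturbation, and moreover that this $\dot x$ can be taken with $|\dot x-\sigma|\lesssim|U_--U_L|+|U_+-U_R|$. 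For a contact discontinuity the field is linearly degenerate, the intrinsic entropy dissipation vanishes and the speed is constant along the integral curve, so both front terms can be made to vanish outright; for a genuinely nonlinear Liu shock the monotonicity of the shock speed along the Hugoniot locus and the entropy-measured growth of the shock strength provide the required sign of the dissipation. Establishing these sign properties for large-amplitude extremal shocks, from the bare $C^2$ structure of $A$ and $\eta$ and with no $BV$ information available (only the trace property), is the delicate, system-dependent work; it is where the scalar and semi-discrete analyses of \cite{Leger,Vasseur_shock} must be substantially extended.

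With the lemma in hand I would define $\dot x(t)$ by selecting such an admissible velocity, noting that since the traces take values in the compact set $\overline{B_K(V_0)}$ the selected velocity is uniformly bounded; hence the ODE $\dot x=V(U_-(t),U_+(t))$, $x(0)=0$, has a Lipschitz solution, the only subtlety being the mere measurability of $t\mapsto U_\pm(t)$, which is handled by a Filippov-type selection. Integrating the two differential inequalities then concludes: the left inequality gives $E_-(t)\le E_-(0)\le C\eps^4$ for all $t$, so the small left perturbation (in particular the exact state $U_L$, if present) is preserved, while the right inequality integrates to $E_+(t)\le E_+(0)+C\eps\,t\le C\eps(1+t)$, the rate $O(\eps)$ issuing from the quadratic front dissipation together with the time-integrated trace control. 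For the shift estimate, the same dissipation inequalities yield $\int_0^t|U_--U_L|^2\,ds\le C\eps^4$ and $\int_0^t|U_+-U_R|^2\,ds\le C\eps(1+t)$; since $|\dot x-\sigma|\lesssim|U_--U_L|+|U_+-U_R|$, Cauchy--Schwarz in time gives
\begin{equation*}
|x(t)-\sigma t|\le\int_0^t|\dot x(s)-\sigma|\,ds\le\sqrt{t}\Big(\int_0^t\big(|U_--U_L|^2+|U_+-U_R|^2\big)\,ds\Big)^{1/2}\le C\sqrt{\eps\,t(1+t)},
\end{equation*}
and translating back through the coercivity bounds produces the two $L^2$ inequalities of the statement.
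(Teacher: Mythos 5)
Your proposal does not prove the theorem; it re-sketches the machinery that the paper has already established and skips the one step that constitutes the actual proof. In the paper, Theorem \ref{theoHyper} is a direct application of Theorem \ref{main} (and Corollary \ref{corr_L2_reverse}): since the general stability result has been proven under hypotheses (H1)--(H3) (resp.\ ($\text{H}^{\prime}$1)--($\text{H}^{\prime}$3)), the entire content of this proof is the \emph{local verification} of those hypotheses on a small ball $B_K(V_0)$, namely: (i) simplicity of the extremal eigenvalue gives a local $C^1$ shock curve $S_V(s)$ with $\sigma^{\prime}_V(0)=\frac{1}{2}\frac{d}{ds}\big\vert_{s=0}\lambda_1(S_V(s))$, so genuine nonlinearity (with the correct orientation of the parameter) or linear degeneracy yields the Liu condition (H1a) uniformly on $B_K(V_0)$ for $K$ small; (ii) the growth condition (H1b) follows from the identity $\frac{d}{ds}\eta(U \midd S_U(s))=S_U^{\prime}(s)^T\cdot D^2\eta(S_U(s))\cdot[S_U(s)-U]$, which is nonnegative for small $s$ by positive definiteness of $D^2\eta$; (iii) the separation conditions (H2)--(H3) follow, for weak discontinuities of the intermediate families, from $\sigma(U_-,U_+)=\lambda(U_\pm)+\mathcal{O}(|U_+-U_-|)$ together with the spectral gap $\lambda^-(V)<\lambda(V)-\delta$ on $B_K(V_0)$. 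Your sketch instead reconstructs the proof of Theorem \ref{main} itself (relative entropy on both sides of a Filippov front, trace-based differential inequalities), but the part you yourself identify as ``the heart of the matter'' --- the geometric dissipation lemma, i.e.\ the analogues of Lemmas \ref{decreasing}, \ref{cornerstone}, \ref{lemmdisc}, \ref{partie gauche}, \ref{partie droite} and Proposition \ref{x(t)} --- is left as an unproven black box, and nowhere do you actually use the hypotheses of the theorem (simple extremal eigenvalue, genuine nonlinearity or linear degeneracy). That missing verification is precisely the proof.

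The one step you do attempt concretely, the drift estimate, is also flawed. First, no admissible selection of $\dot x(t)$ can satisfy $|\dot x-\sigma|\lesssim|U_--U_L|+|U_+-U_R|$: the construction necessarily carries an $\mathcal{O}(\eps)$ drift (this drift is what makes the left relative entropy strictly decrease and bounds the measure of the bad set $\mI$ by $\eps$, a point your sketch glosses over), so for a contact discontinuity the only admissible front speed at the exact discontinuity is $\sigma-\eps$; moreover, at an exact shock the admissible interval $[\Vi(t),\Vs(t)]$ has width comparable to the shock amplitude, not to the trace deviations, which vanish there. Second, the bound $\int_0^t|U_+-U_R|^2\,ds\leq C\eps(1+t)$ does not follow from your differential inequalities: on the right-hand side there is no negative-definite dissipation term, only the upper bound $f_+\leq C\eps$ off $\mI$, so the right trace $U_+(t)$ may remain order one away from $U_R$ on a set of times of positive measure while $E_+(t)$ stays small; your Cauchy--Schwarz step then gives only $|x(t)-\sigma t|\lesssim t$. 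The paper derives $|x(t)-\sigma t|\leq C\sqrt{\eps t(1+t)}$ a posteriori from the already-established $L^2$ bounds, by testing the weak formulation of (\ref{system}) against a spatial cutoff $\phi$, which isolates the term $(x(t)-\sigma t)(U_L-U_R)$ up to errors controlled by $\|U(t,\cdot)-\sU(\cdot-x(t))\|_{L^2}$; some such argument, rather than pointwise control of $\dot x-\sigma$, is needed.
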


In particular, this provides $L^2$ stability, up to a drift, for suitably weak shocks and contact discontinuities in a class of perturbations without $BV$ conditions. Note that the assumption of genuinely nonlinearity or linear degeneracy applies only to the wave family associated to the extremal eigenvalue. No such assumptions are needed on the other wave families.
\vskip0.3cm

The theorems above highlight only a few applications of our theory. 
In the next section, we develop our methods in a more general framework.
The assumptions on the Hugoniot curves are quite natural and we require no smallness condition on the discontinuities at play. We can even relax the strict hyperbolicity condition and consider cases in which the middle eigenvalues degenerate and possibly cross each other.

\vskip0.3cm To conclude this introduction, let us mention that the relative entropy method is also an important tool in the study of asymptotic limits to conservation laws. Applications of the relative entropy method in this context began with the work of Yau \cite{Yau} and have been studied by many others. For incompressible limits, see Bardos, Golse, Levermore \cite{Bardos_Levermore_Golse1,Bardos_Levermore_Golse2}, Lions and  Masmoudi \cite{Lions_Masmoudi}, Saint Raymond et al. \cite{SaintRaymond1,SaintRaymond2,SaintRaymond3,SaintRaymond4}. For the compressible limit, see Tzavaras \cite{Tzavaras_theory} in the context of relaxation 
and \cite{BV,BTV,MV} in the context of hydrodynamical limits. Up to now, this method works as long as the limit solution is Lipschitz. It would be of significant
interest to extend the method to shocks (see \cite{Vasseur_Book}).

\section{Presentation of the results}

\subsection{General framework}

We want to study a system of $m$ equations of the form
\begin{equation}\label{system}
\dt U + \dx A(U)=0,
\end{equation}
where the flux function $A$ is defined on an open, bounded, convex set $\mV \subset \R^m$.
$$
A: \mV\subset \R^m\longrightarrow \R^m.
$$
We assume that $A \in C^2(\mV)$. Additionally, we assume the existence of a strictly convex entropy
$$
\eta: \mV\subset \R^m\longrightarrow \R,
$$
of class $C^2$, and an associated entropy flux
$$
G: \mV\subset \R^m\longrightarrow \R,
$$
of class $C^2$, such that the following compatibility relation holds on $\mV$.
\begin{equation}\label{entropy flux}
\partial_j G=\sum_{i=1}^m \partial_i \eta \thinspace \partial_j A_i \qquad \mathrm{for \  any \ } 1\leq j\leq m.
\end{equation}

If we want to apply our theory to the systems of gas dynamics, we have to define these functions on a suitable subset of the boundary of $\mV$, namely the points corresponding to vacuum states. 
For this reason, we introduce 
$$
\mU=\overline{\mV},
$$
the closure of $\mV$. We assume, for simplicity, that $A$, $\eta$, and $G$ are continuous on $\mU$ (but with no additional regularity up to the boundary). We denote by $\mU^0$ the subset of $\mU$ where at least one of the functions $\eta$, $A$, $G$ is not $C^1$ (typically the vacuum states). 
\vskip0.5cm
\noindent {\bf Remark.} It is possible to consider a more general situation in which $\eta$ is unbounded on $\mV$. In that case, we can add the "vacuum points" in the following way, as in \cite{Vasseur_Book}.
\begin{equation*}%\label{mU}
\mU=\{V\in\R^m \ \vert \ \ \exists V_k\in \mV, \ \lim_{k\to\infty} V_k=V, \ \limsup_{k\to\infty} \ \eta(V_k)<\infty\}.
\end{equation*}
We consider here a slightly less general framework, which is enough to treat the Euler systems on a state domain with bounded velocities.
\vskip0.5cm
Next, we define, for any $V\in \mV$, $U\in\mU$, the relative entropy function
$$
\eta(U \midd V)=\eta(U)-\eta(V)- \nabla \eta(V)\cdot(U-V).
$$
Since $\eta$ is continuous (in fact, convex) on $\mU$ and strictly convex in $\mV$, we have
$$
\eta(U \midd V)\geq 0,\qquad U\in\mU, \ V\in \mV,
$$
and
$$
\eta(U \midd V)=0 \quad \mathrm{if \ and \ only \ if} \quad U=V.
$$
Indeed, the following lemma shows that the relative entropy is comparable to the square of the $L^2$ norm.
\begin{lemm}\label{L2}
For any compact set $\Omega \subset \mV$, there exist $C_1, C_2 > 0$ such that 
$$
C_1|U-V|^2\leq\eta(U \midd V)\leq C_2|U-V|^2,
$$
for any $U\in \mU$ and  $V \in \Omega$.
\end{lemm}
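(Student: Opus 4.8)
The plan is to reduce everything to two regimes according to whether $U$ is close to or far from $V$, exploiting two structural facts. First, since $\mV$ is bounded, its closure $\mU$ is compact, so $|U-V|$ ranges over a bounded set and $\eta(U\midd V)$ is a continuous function on the compact set $\mU\times\Omega$. Second, since $\Omega$ is compact and contained in the open set $\mV$, there is a $\delta_0>0$ such that the closed neighborhood $\Omega_{\delta_0}:=\{W\in\R^m\,:\,\mathrm{dist}(W,\Omega)\leq\delta_0\}$ is still a compact subset of $\mV$. The point of the second fact is that it keeps us away from the set $\mU^0$ of vacuum points, where $\eta$ fails to be $C^2$, as long as we remain within distance $\delta_0$ of $\Omega$.

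In the near regime $|U-V|<\delta_0$, every point $V+s(U-V)$ lies within $\delta_0$ of $V\in\Omega$, so the whole segment $[V,U]$ lies in $\Omega_{\delta_0}\subset\mV$, where $\eta$ is $C^2$. I would then apply the integral form of Taylor's theorem; because $\nabla\eta(V)\cdot(U-V)$ is exactly the affine part subtracted off in the definition of $\eta(U\midd V)$, this gives
$$
\eta(U\midd V)=\int_0^1(1-s)\,(U-V)^{\!\top} D^2\eta\big(V+s(U-V)\big)(U-V)\,ds,
$$
where $D^2\eta$ is the Hessian of $\eta$. Since $D^2\eta$ is continuous and, by strict convexity, positive definite on the compact set $\Omega_{\delta_0}$, its eigenvalues are bounded between two constants $0<\lambda_{\min}\leq\lambda_{\max}<\infty$ uniformly on $\Omega_{\delta_0}$, so the integral is pinched between $\tfrac12\lambda_{\min}|U-V|^2$ and $\tfrac12\lambda_{\max}|U-V|^2$. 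This yields both inequalities with uniform constants throughout the near regime.

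In the far regime $|U-V|\geq\delta_0$ I would abandon Taylor and argue purely by compactness. The set $\{(U,V)\in\mU\times\Omega\,:\,|U-V|\geq\delta_0\}$ is compact, and on it $U\neq V$, so the continuous function $\eta(U\midd V)$ attains a strictly positive minimum $m>0$ and a finite maximum $M$ there, where the strict positivity is exactly the characterization $\eta(U\midd V)=0\iff U=V$ already recorded above the lemma. Writing $D:=\mathrm{diam}(\mU)<\infty$, on this regime $\delta_0\leq|U-V|\leq D$, so $\eta(U\midd V)\geq m\geq (m/D^2)|U-V|^2$ and $\eta(U\midd V)\leq M\leq (M/\delta_0^2)|U-V|^2$, giving both inequalities again, now with constants $m/D^2$ and $M/\delta_0^2$.

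Combining the two regimes, I would set $C_1=\min\{\tfrac12\lambda_{\min},\,m/D^2\}$ and $C_2=\max\{\tfrac12\lambda_{\max},\,M/\delta_0^2\}$. The only genuine obstacle is the lack of regularity of $\eta$ at the boundary and vacuum points of $\mU$: one cannot Taylor-expand up to a point $U\in\partial\mV$. The entire device of separating the two regimes is designed precisely to confine the Hessian argument to $\Omega_{\delta_0}$, where smoothness and uniform positive definiteness hold, and to relegate all boundary behavior to the far regime, where only continuity, the bare positivity of the relative entropy, and the boundedness of $\mV$ are needed.
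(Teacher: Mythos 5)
Your proof is correct and follows essentially the same strategy as the paper's: a near regime in which the segment $[V,U]$ stays inside a compact subset of $\mV$, where the integral (Taylor/Hessian) representation pinches $\eta(U \midd V)$ between the extreme eigenvalues of $D^2\eta$, and a far regime handled by compactness together with the strict positivity of the relative entropy off the diagonal. The only immaterial difference is that you separate the regimes by the distance $|U-V| \lessgtr \delta_0$, whereas the paper splits according to whether $U$ lies in a convex compact enlargement $\tilde{\Omega}$ of $\Omega$; both devices serve the identical purpose of confining the Hessian argument to a region where $\eta$ is $C^2$ and uniformly convex.
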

We give a proof of this well-known estimate in the appendix.
\vskip0.3cm

For a pair of states $U_L \ne U_R$ in $\mV$, we say that $(U_L,U_R)$ is an entropic Rankine-Hugoniot discontinuity if there exists $\sigma\in \R$ such that 
\begin{equation}\label{RH}
\begin{array}{l}
\ds{A(U_R)-A(U_L)=\sigma(U_R-U_L),}\\[0.3cm]
\ds{G(U_R)-G(U_L)\leq\sigma(\eta(U_R)-\eta(U_L)).}
\end{array}
\end{equation}
Equivalently, this means that the discontinuous function $U$ defined by
\begin{align*}
U(t,x) = 
\begin{cases}
U_L, &\text{if $x<\sigma t$,}\\[0.1 cm]
U_R, &\text{if $x>\sigma t$,}\\
\end{cases}
\end{align*}
is a weak solution to (\ref{system}) verifying also, in the sense of distributions, the entropy inequality
\begin{equation}\label{entropie}
\dt \eta(U)+\dx G(U)\leq 0.
\end{equation}

Finally, we denote by $\lambda^-(U)$ and $\lambda^+(U)$ the smallest and largest eigenvalues, respectively, of $\nabla A(U)$. Hereafter, we assume that  $\lambda^\pm(U)$ are simple eigenvalues for all $U\in \mV$ and that $U\to\lambda^\pm(U)$ lie in $L^\infty(\mU)$. (They may be undefined on $\mU^0$.)

\subsection{Hypotheses on the system}\label{hypos}

First, we assume that for any $(U_-, U_+)$ entropic Rankine-Hugoniot discontinuity with $U_-\neq U_+$ we have both $U_-\notin \mU^0$ and   $U_+\notin \mU^0$. (Typically, there is no shock connecting the vacuum.) 
\vskip0.3cm
We will consider two sets of assumptions. One set will imply the result on the 1-shock (or 1-contact discontinuity), the second set (dual from the first one)
will imply the result on the n-shock (or n-contact discontinuity). A system satisfying both set of hypotheses, verifies both results. 
\vskip0.4cm
\noindent{\bf First set of hypotheses}
\vskip0.1cm 

The first set of hypotheses, related to some  $U_L\in \mV$, is the following ((H1) to (H3)). 
\begin{itemize}
\item[(H1)] (Family of 1-contact discontinuities or 1-shocks verifying the Liu condition)\\
There exists a neighborhood $B\subset\mV$ of $U_L$ such that for any $U\in B$, there is a one parameter family of states $S_U(s)\in \mU$ defined on an interval $[0,s_U]$,  such that  $S_{U}(0)=U$, and 
$$
A(S_U(s))-A(U)=\sigma_U(s)(S_U(s)-U), \qquad s\in [0,s_U],
$$
(which means that $(U,S_U(s))$ is a Rankine-Hugoniot discontinuity with velocity $\sigma_U(s)$). We assume that $U\to s_U$ is Lipschitz on $B$ and 
both $(s,U)\to S_U(s)$ and $(s,U)\to\sigma_U(s)$ are $C^1$ on $\{(s,U) | U\in B, 0\leq s\leq s_U\}$. We assume also that the following properties hold for $U\in B$.
\begin{itemize}
\item[(a)] $\sigma_U'(s)\leq 0$ for $0\leq s \leq s_U$ (the speed of the shock decreases with $s$), and $\sigma_U(0) = \lambda^-(U)$.
\item[(b)] (1-shock) If $\sigma_U' \not\equiv 0$, then  $\ds{\frac{d}{ds}\eta(U | S_U(s))}\geq0$ (the shock "strengthens" with $s$) for all $s$.
\end{itemize}
\item[(H2)]  
If $(U,V)$ is an entropic Rankine-Hugoniot discontinuity with velocity $\sigma$ such that $V \in B$, then $\sigma\geq\lambda^-(V)$.
\item[(H3)]
If $(U,V)$ is an entropic Rankine-Hugoniot discontinuity with velocity $\sigma$ such that $U \in B$ and $\sigma < \lambda^-(U)$, then $(U,V)$ is a 1-shock. In particular, $V = S_U(s)$ for some $0\leq s\leq s_U$.
\end{itemize}
\vskip0.4cm

\noindent{\bf Second set of hypotheses}
\vskip0.1cm

The second set of hypotheses, related to some  $U_R\in \mV$, is the following (($\text{H}^{\prime}$1)  to ($\text{H}^{\prime}$3)). 
\begin{itemize}
\item[($\text{H}^{\prime}$1)] (Family of $n$-contact discontinuities or $n$-shocks verifying the Liu condition)\\
There exists a neighborhood $B\subset\mV$ of $U_R$  such that  for every $U\in B$ there is a one parameter family of states $S_U(s)\in \mU$ defined on an interval $[0,s_U]$,  such that $S_{U}(0)=U$, and 
$$
A(S_U(s))-A(U)=\sigma_U(s)(S_U(s)-U), \qquad s\in [0,s_U],
$$
(which means that $(S_U(s),U)$ is a Rankine-Hugoniot discontinuity with velocity $\sigma_U(s)$). We assume that $U\to s_U$ is Lipschitz and 
both $(s,U)\to S_U(s)$ and $(s,U)\to\sigma_U(s)$ are  $C^1$ on $\{(s,U)| U\in B, s\in [0,s_U]\}$. We assume also that the following properties hold for $U\in B$.
\begin{itemize}
\item[(a)] $\sigma_U'(s)\geq 0$ for $0\leq s\leq s_U$ (the speed of the shock increases with $s$), and $\sigma_U(0)=\lambda^+(U)$.
\item[(b)] ($n$-shock) If $\sigma_U' \not\equiv 0$, then  $\ds{\frac{d}{ds}\eta(U | S_U(s))}\geq0$ (the shock ''strengthens''  with $s$) for all $s$.
\end{itemize}
\item[($\text{H}^{\prime}$2)] 
If $(V,U)$ is an entropic Rankine-Hugoniot discontinuity with velocity $\sigma$ such that $V \in B$, then $\sigma\leq\lambda^+(V)$.
\item[($\text{H}^{\prime}$3)] 
If $(V,U)$ is an entropic Rankine-Hugoniot discontinuity with velocity $\sigma$ such that $U \in B$ and $\sigma > \lambda^+(U)$, then $(V,U)$ is an n-shock. In particular, $V = S_U(s)$ for some $0\leq s\leq s_U$.
\end{itemize}
\vskip0.5cm
\noindent{\bf Remarks}
\vskip0.1cm

\begin{itemize}
\item Note that a given system (\ref{system}) verifies Properties (H1) to (H3) relative to $U \in \mV$ if and only if  the system 
\begin{equation}\label{reverse system}
 \dt U-\dx A(U)=0,
\end{equation}
verifies Properties ($\text{H}^{\prime}$1) to ($\text{H}^{\prime}$3) relative to the same $U \in \mV$. The properties are, in this way, dual. 
\item In the case $\sigma^{\prime}_U(s)=0$ for all $s$, Property (H1) just assumes the existence of a 1-family of contact discontinuities.
\item In the case where $\sigma_U(s)$ is not constant in $s$, Property (H1) assumes the existence of a family a 1-shocks $(U, S_U(s))$ verifying the Liu entropy condition for all $s$ (Property (a)). The only additional requirement is (b), which is a condition on the growth of the shock along $S_U(s)$, where the growth is measured
through the pseudo-metric induced from the entropy. 
This condition arises naturally in the study of admissibility criteria for systems of conservation laws. In particular, it ensures that Liu admissible shocks are entropic even for moderate to strong shocks.
Indeed, this fact follows from the important formula
\begin{align*}
G(S_{U_L}(s))-G(U_L) = \sigma_{U_L}(s) \thinspace (\eta(S_{U_L}(s)) - \eta(U_L)) + \int_0^s\sigma'_{U_L}(\tau)\eta(U_L \midd S_{U_L}(\tau))\,d\tau,
\end{align*}
which is proved in Section \ref{structure}. (See also \cite{Dafermos4,Lax,Ruggeri}.)

\item Hypothesis (H2) is fulfilled under the very general assumption that all the entropic Rankine-Hugoniot discontinuities verify the Lax entropy conditions, that is
$$
\lambda_i(U_-) \geq \sigma \geq \lambda_i(U_+),
$$
for any $i$-shocks $(U_-,U_+)$ with velocity $\sigma$ and any $1\leq i\leq n$. Indeed, we need only the second inequality, and the fact that 
$\lambda_i(U_+)\geq \lambda^-(U_+)$.

\item Hypothesis (H3) is a requirement that the family of 1-discontinuities is well-separated from the other Rankine-Hugoniot discontinuities and do not interfere with them. In the case of strictly hyperbolic systems,
it is, for instance, a consequence of the extended Lax admissibility condition%popular stability condition 
$$
\lambda_{i+1}(U_+) \geq \sigma \geq \lambda_{i-1}(U_-),
$$
for all i-shocks $(U_-,U_+)$, $i>1$, with velocity $\sigma$.
Indeed, we use only the second inequality and the fact that $\lambda_{i-1}(U_-)\geq\lambda^-(U_-)$. Note that we need to separate only the $1$-shocks
issued from $B$, that is close to $U_L$. 
\item The existence of an entropy $\eta$ implies that the system (\ref{system}) is hyperbolic. Since $A\in C^2(\mV)$, the eigenvalues of $\nabla A(U)$ vary continuously on $\mV$. In particular, since $\lambda^\pm(U)$ are simple for $U\in\mV$, the implicit function theorem ensures that the maps $U\to\lambda^\pm(U)$ are in $C^1(\mV)$. 
Note, however, that those maps may be discontinuous on $\mU$.
\end{itemize}

\subsection{Statement of the result}

Our main result is the following.
\begin{theo}\label{main}
Consider a system of conservation laws (\ref{system}), such that $A$ is $C^2$ on  an open, bounded, convex subset  $\mV$ of $\R^m$.
We assume that there exists  a $C^2$ strictly convex entropy $\eta$ on $\mV$ verifying (\ref{entropy flux}). We assume that $\eta$, $A$ and $G$ are continuous on $\mU=\overline{\mV}$. Let $U_L\in \mV$. Assume that the system (\ref{system}) verifies the Properties (H1)--(H3). Consider $U_R\in\mV$ such that 
  $(U_L,U_R)$ is a 1-shock (or 1-contact discontinuity) with velocity $\sigma$. (This means that there exists $s>0$ such that $U_R=S_{U_L}(s)$ and $\sigma=\sigma_{U_L}(s)$).   Then there exist constants $C>0$, $\eps_0>0$ such that for any $0<\eps<\eps_0$ and any weak entropic solution $U$ of (\ref{system}) with values in $\mU$ on $(0,T)$ verifying the strong trace property of Definition \ref{defi_trace}, and 
\begin{eqnarray*}
&&\int_0^\infty |U_0(x)- U_R|^2\,dx\leq \eps,\qquad
\int_{-\infty}^0|U_0(x)- U_L|^2\,dx\leq \eps^4,
\end{eqnarray*}
there exists a Lipschitzian map $x(t)$ such that for any $0<t<T$:
\begin{eqnarray*}
&&\int_0^\infty |U(t,x+x(t))-U_R|^2\,dx\leq C\eps(1+t),\qquad
\int_{-\infty}^0 |U(t,x+x(t))- U_L|^2\,dx\leq \eps^4.
\end{eqnarray*}
Moreover,
$$
|x(t)-\sigma t|\leq C\sqrt{\eps t(1+t)}.
$$
\end{theo}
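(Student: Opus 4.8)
The plan is to run the relative-entropy method of Dafermos and DiPerna, but with a carefully chosen time-dependent shift $x(t)$ and an \emph{asymmetric} treatment of the two sides of the discontinuity. For a fixed state $V\in\mV$ introduce the relative entropy flux
$$
F(U\midd V)=G(U)-G(V)-\nabla\eta(V)\cdot(A(U)-A(V)),
$$
so that for any entropic weak solution $U$ one has, in the sense of distributions, $\dt\eta(U\midd V)+\dx F(U\midd V)\le 0$ (this is the entropy inequality \eqref{entropie} rewritten, using that $V$ is constant and \eqref{system} holds). Splitting space at $x(t)$ and comparing to $U_L$ on the left, $U_R$ on the right, I would track
$$
E_L(t)=\int_{-\infty}^{x(t)}\eta(U(t,x)\midd U_L)\,dx,\qquad E_R(t)=\int_{x(t)}^{\infty}\eta(U(t,x)\midd U_R)\,dx.
$$
The strong trace property of Definition \ref{defi_trace} gives one-sided traces $U_\pm(t)$ of $U$ along the Lipschitz curve $x(t)$, and the $L^2$ hypotheses force $F(U\midd U_L)$ and $F(U\midd U_R)$ to vanish at $\mp\infty$; differentiating in time then yields the localized inequalities
\begin{align*}
\frac{d}{dt}E_L(t)&\le \dot x(t)\,\eta(U_-(t)\midd U_L)-F(U_-(t)\midd U_L),\\
\frac{d}{dt}E_R(t)&\le F(U_+(t)\midd U_R)-\dot x(t)\,\eta(U_+(t)\midd U_R).
\end{align*}

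The heart of the argument is to choose the shift velocity so that the left bracket is nonpositive, i.e. so that $F(U_-\midd U_L)\ge \dot x\,\eta(U_-\midd U_L)$, forcing $E_L$ to be nonincreasing. Since $\eta(U_-\midd U_L)\ge 0$, this means taking $\dot x(t)$ no larger than the ratio $F(U_-\midd U_L)/\eta(U_-\midd U_L)$. A second-order expansion at $U_L$ shows this ratio approaches the numerical range of $\nabla A(U_L)$, hence is $\ge\lambda^-(U_L)\ge\sigma$ for traces near $U_L$; for traces far from $U_L$ one must preclude ratios below $\sigma$, and this is precisely where Hypotheses (H1)--(H3) enter. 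Using (H3) to characterize the admissible slow jumps issued from a neighborhood of $U_L$ as $1$-shocks on the curve $S_{U_L}(\cdot)$, the monotone-speed (Liu) condition (H1)(a), the entropy-growth condition (H1)(b), together with (H2) and the key identity
$$
F(S_{U_L}(s)\midd U_L)-\sigma_{U_L}(s)\,\eta(S_{U_L}(s)\midd U_L)=\int_0^s\sigma'_{U_L}(\tau)\,\eta(U_L\midd S_{U_L}(\tau))\,d\tau,
$$
one controls the sign of the left bracket and produces a bounded velocity $\dot x(t)=V(U_-(t))$, close to $\sigma$ when $U_-$ is close to $U_L$, for which $E_L$ is nonincreasing; in particular $E_L(t)\le E_L(0)\le C\eps^4$, and Lemma \ref{L2} turns this into the stated left $L^2$ bound.

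Two points then remain. First, the velocity $V(\cdot)$ is bounded but $t\mapsto U_-(t)$ has no a priori time regularity, so the Lipschitz shift $x(t)$ cannot be obtained from classical ODE theory; I would construct it by a discretization/regularization and fixed-point scheme, passing to the limit with the trace property of Definition \ref{defi_trace} to identify the limiting traces, obtaining $|\dot x|\le C$. Second, on the right the \emph{same} velocity cannot make the right bracket negative---for a $1$-shock $\sigma>\lambda^-(U_R)$, so $F(U_+\midd U_R)-\sigma\,\eta(U_+\midd U_R)$ is an indefinite quadratic form to leading order---but it can be bounded, and a Gronwall-type estimate together with $E_R(0)\le C\eps$ gives $E_R(t)\le C\eps(1+t)$. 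Finally, the quadratic lower bound on the defect in the left bracket yields $\int_0^t|\dot x(s)-\sigma|^2\,ds\le C\eps(1+t)$, so Cauchy--Schwarz gives $|x(t)-\sigma t|\le\sqrt{t}\,(\int_0^t|\dot x-\sigma|^2\,ds)^{1/2}\le C\sqrt{\eps\,t(1+t)}$; Lemma \ref{L2} converts the $E_R$ bound into the right $L^2$ estimate.

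I expect the main obstacle to be the pointwise sign analysis of the left bracket for \emph{arbitrary} admissible trace values $U_-$ (not merely those close to $U_L$), which is exactly what (H1)--(H3) and the entropy structure are designed to supply, coupled with the rigorous construction of the Lipschitz shift $x(t)$ from traces that are only measurable in time. The right-hand Gronwall step is comparatively soft, but getting the linear-in-$t$ growth with the correct $\eps$ dependence requires care in handling the indefinite quadratic form at the trace $U_+$.
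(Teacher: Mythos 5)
Your overall architecture does match the paper's (asymmetric relative-entropy functionals on either side of a Lipschitz shift, a velocity law built from $F/\eta$, hypotheses (H1)--(H3) to classify trace values), but two of your steps fail as written, and they are precisely where the content of the theorem lies.

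First, the right-side bound. If all you know is that the bracket $F(U_+\midd U_R)-\dot x\,\eta(U_+\midd U_R)$ is \emph{bounded}, integration gives $E_R(t)\le E_R(0)+Ct\le C\eps+Ct$, which is not $C\eps(1+t)$; and there is no Gronwall structure available, since the bracket is not controlled by $E_R$. The mechanism that actually produces the $\eps$ factor has two ingredients missing from your proposal. (i) The velocity must carry a strict drift: the paper takes $V(U)=\min\{F(U,U_L)/\eta(U\midd U_L),\,\lambda^-(U)\}-\eps$, so that whenever $\eta(U(t,x(t)-)\midd U_L)\ge\eps^2$ the left entropy decreases at rate at least $\eps^3$; combined with $E_L(0)\le\eps^4$ this forces the exceptional set $\mI=\{t:\eta(U(t,x(t)-)\midd U_L)\ge\eps^2\}$ to have measure $\le\eps$, and on $\mI$ one uses only boundedness of the bracket. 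This is exactly why the hypotheses are asymmetric ($\eps^4$ versus $\eps$); your velocity, which makes $E_L$ merely nonincreasing, gives no control on $|\mI|$. (ii) Off $\mI$ the left trace is $O(\eps)$-close to $U_L$, and one must show the bracket at $U_+$ is $\le C\eps$. Your key identity, taken relative to $U_L$, cannot do this: by (H3) the jump at $x(t)$ may be a \emph{large} 1-shock, $U_+=S_{U_L}(s')+O(\eps)$ with $U_+$ far from both $U_L$ and $U_R$, and the sign of $F(U_+,U_R)-\sigma_{U_L}(s')\,\eta(U_+\midd U_R)$ is then an order-one question. What is needed is the DiPerna-type inequality relative to $U_R$,
$$
F(S_{U_L}(s),U_R)-\sigma_{U_L}(s)\,\eta(S_{U_L}(s)\midd U_R)=\int_{s_0}^{s}\sigma_{U_L}'(\tau)\bigl(\eta(U_L\midd S_{U_L}(\tau))-\eta(U_L\midd S_{U_L}(s_0))\bigr)\,d\tau\le 0,\qquad U_R=S_{U_L}(s_0),
$$
whose sign uses (H1a) together with the entropy-growth condition (H1b) along the \emph{entire} shock curve; the jumps that are not 1-shocks, and the continuity points, are handled by (H2) and the monotonicity $F(U_+,V)-\sigma\eta(U_+\midd V)\le F(U_-,V)-\sigma\eta(U_-\midd V)$ valid for any entropic discontinuity. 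Without this $U_R$-relative lemma the case analysis off $\mI$ does not close.

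Second, the drift estimate. The claim that the left-bracket defect yields $\int_0^t|\dot x-\sigma|^2\,ds\le C\eps\,t(1+t)$ is a non sequitur, and it is false for the path you can actually construct: at points of continuity where $U_-(t)$ is close to $U_L$, the Filippov velocity is forced to be close to $\lambda^-(U_L)-\eps$ (that is where the minimum in $V$ sits near $U_L$), which for a genuine (non-contact) 1-shock differs from $\sigma$ by a fixed, $\eps$-independent amount, while the left-bracket defect at such points is vanishingly small. So no pointwise or $L^2$-in-time closeness of $\dot x$ to $\sigma$ can be extracted from the entropy dissipation. The paper obtains $|x(t)-\sigma t|\le C\sqrt{\eps t(1+t)}$ \emph{a posteriori}: once both $L^2$ bounds are proved, one has $\|U(t,\cdot)-\sU(\cdot-x(t))\|_{L^2}\le C\sqrt{\eps(1+t)}$, and integrating the conservation law against a spatial cutoff supported on $\{|x|\le 2MT\}$ identifies $(x(t)-\sigma t)(U_L-U_R)$ with error terms of size $\sqrt{\eps\,T(1+T)}$; dividing by $|U_L-U_R|$ gives the drift bound. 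Some argument of this weak-formulation type is unavoidable; the velocity law itself will not give it.
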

This shows that the theory is essentially an $L^2$ theory. The correction of the position of the approximated shock $x(t)$ is fundamental, since the 
result is trivially wrong without it, even for Burgers' equation in the scalar case (see \cite{Leger}). Part of the difficulty of the proof is to find this correct position. Indeed, we will construct it so that $\int_{-\infty}^0\eta(U(t,x+x(t))|U_L)\,dx$ is decreasing and remains smaller than $\eps^4$ for all time.
\vskip0.3cm
Applying the main theorem on the $\tilde{U}(t,x)=U(t,-x)$, which is an entropic solution to (\ref{reverse system}), we obtain the following corollary.
\begin{corr}\label{corr_L2_reverse}
Consider a system of conservation laws (\ref{system}), such that $A$ is $C^2$ on  a open, bounded, convex subset  $\mV$ of $\R^m$.
We assume that there exists  a $C^2$ strictly convex entropy $\eta$ on $\mV$ verifying (\ref{entropy flux}). We assume that $\eta$, $A$ and $G$ are continuous on $\mU=\overline{\mV}$. Let $U_R\in \mV$. Assume that (\ref{system}) verifies the properties ($H^{\prime}$1)--($H^{\prime}$3). Let $U_L\in\mV$ such that 
  $(U_L,U_R)$ is a $n$-shock (or $n$-contact discontinuity) with velocity $\sigma$.   (This means that there exists $s>0$ such that $U_L=S_{U_R}(s)$ and $\sigma=\sigma_{U_R}(s)$).  Then there exist constants $C>0$, $\eps_0>0$ such that for any $0<\eps<\eps_0$ and any weak entropic solution $U$ of (\ref{system}) with values in $\mU$ on $(0,T)$ verifying the strong trace property of Definition \ref{defi_trace}, and 
\begin{eqnarray*}
&&\int_0^\infty |U_0(x)- U_R|^2\,dx\leq \eps^4,\qquad
\int_{-\infty}^0|U_0(x)- U_L|^2\,dx\leq \eps,
\end{eqnarray*}
there exists a Lipschitzian map $x(t)$ such that for any $0<t<T$:
\begin{eqnarray*}
&&\int_0^\infty |U(t,x+x(t))-U_R|^2\,dx\leq \eps^4,\qquad
\int_{-\infty}^0 |U(t,x+x(t))- U_L|^2\,dx\leq C\eps(1+t).
\end{eqnarray*}
Moreover,
$$
|x(t)-\sigma t|\leq C\sqrt{\eps t(1+t)}.
$$
\end{corr}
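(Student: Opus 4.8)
The plan is to deduce the corollary from Theorem \ref{main} by the spatial reflection $x\mapsto -x$, so that essentially no new analysis is required beyond careful bookkeeping. Set $\tilde U(t,x)=U(t,-x)$. First I would verify that $\tilde U$ is a weak entropic solution of the reversed system (\ref{reverse system}). Indeed, writing $y=-x$ and using the chain rule, one finds $\dt\eta(\tilde U)(t,x)+\dx[-G(\tilde U)](t,x)=\bigl(\dt\eta(U)+\dx G(U)\bigr)(t,y)\le 0$, so $\tilde U$ satisfies the entropy inequality for (\ref{reverse system}) with entropy $\eta$ and entropy flux $\tilde G=-G$, which is precisely the flux compatible with $-A$ via (\ref{entropy flux}). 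The same computation at the level of the flux shows that $\tilde U$ solves $\dt U-\dx A(U)=0$ weakly. Since the reflection maps Lipschitz curves to Lipschitz curves and interchanges left and right one-sided neighborhoods, $\tilde U$ inherits the strong trace property of Definition \ref{defi_trace}; moreover $\tilde U$ takes values in $\mU$ with the same $L^\infty$ bound as $U$.

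Next I would invoke the duality recorded in the remarks of Section \ref{hypos}: since (\ref{system}) satisfies $(\text{H}^{\prime}1)$--$(\text{H}^{\prime}3)$ relative to $U_R$, the reversed system (\ref{reverse system}) satisfies $(\text{H}1)$--$(\text{H}3)$ relative to the same point $U_R$. Moreover, the largest eigenvalue $\lambda^+$ of $\nabla A$ corresponds to the smallest eigenvalue $-\lambda^+$ of $\nabla(-A)$, and the $n$-shock family $S_{U_R}$ for (\ref{system}) becomes the $1$-shock family for (\ref{reverse system}) with velocities negated. Consequently the $n$-shock $(U_L,U_R)$ of velocity $\sigma$ for (\ref{system}), characterized by $U_L=S_{U_R}(s)$ and $\sigma=\sigma_{U_R}(s)$, becomes after reflection the $1$-shock $(U_R,U_L)$ of velocity $-\sigma$ for (\ref{reverse system}), with base state $U_R$ and terminal state $U_L$. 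Thus Theorem \ref{main} applies to $\tilde U$ with left state $U_R$, right state $U_L$, and shock speed $-\sigma$.

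I would then translate the data. With $\tilde U_0(x)=U_0(-x)$, the changes of variable give $\int_0^\infty|\tilde U_0(x)-U_L|^2\,dx=\int_{-\infty}^0|U_0(y)-U_L|^2\,dy\le\eps$ and $\int_{-\infty}^0|\tilde U_0(x)-U_R|^2\,dx=\int_0^\infty|U_0(y)-U_R|^2\,dy\le\eps^4$, which are exactly the smallness hypotheses of Theorem \ref{main} for the $1$-shock $(U_R,U_L)$. Applying the theorem produces a Lipschitz path $\tilde x(t)$ such that $\int_0^\infty|\tilde U(t,x+\tilde x(t))-U_L|^2\,dx\le C\eps(1+t)$, $\int_{-\infty}^0|\tilde U(t,x+\tilde x(t))-U_R|^2\,dx\le\eps^4$, and $|\tilde x(t)+\sigma t|\le C\sqrt{\eps t(1+t)}$.

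Finally I would set $x(t)=-\tilde x(t)$, again Lipschitz, and undo the reflection: since $\tilde U(t,x+\tilde x(t))=U(t,-x+x(t))$, the substitution $z=-x$ turns the two estimates above into $\int_{-\infty}^0|U(t,z+x(t))-U_L|^2\,dz\le C\eps(1+t)$ and $\int_0^\infty|U(t,z+x(t))-U_R|^2\,dz\le\eps^4$, while $|\tilde x(t)+\sigma t|=|x(t)-\sigma t|$ yields the stated drift bound. This is precisely the conclusion of the corollary. Since all the genuine analysis is already contained in Theorem \ref{main} and the duality remark, the only real subtlety — and hence the step I would check most carefully — is the consistent tracking of which endstate plays the role of ``left'' versus ``right'' under the reflection, together with the accompanying sign change of the velocity.
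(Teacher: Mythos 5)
Your proposal is correct and is exactly the paper's argument: the authors obtain the corollary by applying Theorem \ref{main} to the reflected solution $\tilde U(t,x)=U(t,-x)$, which is an entropic solution of the reversed system (\ref{reverse system}), using the duality between (H1)--(H3) and ($\text{H}^{\prime}$1)--($\text{H}^{\prime}$3) noted in Section \ref{hypos}. Your write-up simply makes explicit the bookkeeping (entropy flux $-G$, swapped endstates, negated velocity, and the substitution $x(t)=-\tilde x(t)$) that the paper leaves to the reader.
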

In particular, if we consider a $2\times2$ system which verifies both (H1)--(H3) for any $U_L\in\mV$ and ($\text{H}^{\prime}$1)--($\text{H}^{\prime}$3) for any $U_R\in\mV$, then
all shocks (and contact discontinuities) are unique and stable in $L^2$. 
\vskip0.3cm
Note that the assumptions on the system are quite minimal. There is an assumption only on the wave coming from $U_L$ (or going to $U_R$ in the case
of the corollary). There are absolutely no assumptions on the other waves (which may not even exist or may be neither genuinely nonlinear nor linearly degenerate). The extremal property that the shock curve under consideration corresponds to the smallest eigenvalue of $\nabla A(U)$ (resp. the largest 
eigenvalue of $\nabla A(U)$) is only prescribed on a small neighborhood of $U_L$ (resp. of $U_R$). 
Finally the theory allows us (via the extended set $\mU$) to consider weak solutions which
may take values $U$ corresponding to points of non-differentiability of $A$ and $\eta$. This includes, for example, the vacuum states in fluid mechanics.

\subsection{Main ideas of the proof}

The main idea of the proof is to find a path $t \to x(t)$ for which the total relative entropy of $U(t,x)$ with respect to $U_L$ on the left of $x(t)$, given by 
$$
\int_{-\infty}^0\eta(U(t,x+x(t)) \midd U_L)\,dx,
$$
is strictly decreasing. The following estimate underlies most of our analysis.
\begin{lemm}\label{defi_F}
If $V\in\mV$ and $U$ is any weak entropic solution of (\ref{system}), then $\eta(U \midd V)$ is a solution in the sense of distributions to
$$
\dt \eta(U \midd V)+\dx F(U, V)\leq 0,
$$
where 
$$
F(U,V)=G(U)-G(V)-\nabla \eta(V) \cdot (A(U)-A(V)).
$$
\end{lemm}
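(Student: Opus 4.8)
The plan is to obtain the inequality by forming a constant-coefficient linear combination of the entropy inequality \eqref{entropie} and the weak formulation of \eqref{system}, exploiting crucially that $V$ is held fixed. This is the classical Dafermos--DiPerna relative-entropy computation. Since $V\in\mV$ is constant in $(t,x)$, the quantities $\eta(V)$, $\nabla\eta(V)$, $G(V)$ and $A(V)$ are all constants, so that
$$
\eta(U\midd V)=\eta(U)-\bigl(\nabla\eta(V)\cdot U\bigr)+c_0,\qquad F(U,V)=G(U)-\bigl(\nabla\eta(V)\cdot A(U)\bigr)+c_1,
$$
where $c_0=-\eta(V)+\nabla\eta(V)\cdot V$ and $c_1=-G(V)+\nabla\eta(V)\cdot A(V)$ are constants. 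Because $U\in L^\infty$ takes values in the compact set $\mU$ and $\eta$, $A$, $G$ are continuous there, each of these functions lies in $L^1_{\mathrm{loc}}$, so $\eta(U\midd V)$ and $F(U,V)$ define distributions and the statement is meaningful. Note that the strong trace property plays no role here.

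First I would record the two distributional hypotheses against an arbitrary nonnegative test function $\phi\in C^\infty_c((0,T)\times\R)$. The entropy inequality reads
$$
\iint\bigl[\eta(U)\,\dt\phi+G(U)\,\dx\phi\bigr]\,dx\,dt\geq 0,
$$
while testing each of the $m$ scalar equations of \eqref{system} componentwise gives
$$
\iint\bigl[U_i\,\dt\phi+A_i(U)\,\dx\phi\bigr]\,dx\,dt=0,\qquad 1\leq i\leq m.
$$

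Next I would form the combination: multiply the $i$-th conservation law by the constant $\partial_i\eta(V)$, sum over $i$, and subtract from the entropy inequality. Since any constant $c$ satisfies $\iint c\,\dt\phi=\iint c\,\dx\phi=0$ by the compact support of $\phi$, the additive constants $c_0,c_1$ may be reinserted freely. This yields exactly
$$
\iint\bigl[\eta(U\midd V)\,\dt\phi+F(U,V)\,\dx\phi\bigr]\,dx\,dt\geq 0,
$$
which is the distributional form of $\dt\eta(U\midd V)+\dx F(U,V)\leq 0$.

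There is no genuine obstacle here; the argument is purely algebraic once the distributional formulations are in place. The only points demanding care are the bookkeeping ones: that no derivative ever falls on $V$ (it cannot, as $V$ is constant), that combining the distributional \emph{equality} against constant coefficients $\partial_i\eta(V)$ with the distributional \emph{inequality} preserves the sign of the inequality, and that the constants $c_0,c_1$ drop out by compact support of $\phi$. The local integrability noted above guarantees that every integral is finite.
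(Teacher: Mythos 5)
Your proof is correct and is exactly the argument the paper intends: the paper dispenses with it in one line ("direct from the definition of the relative entropy, noting that $V$ is constant"), which is precisely the constant-coefficient combination of the entropy inequality and the weak form of (\ref{system}) that you carry out in detail. Your added bookkeeping (the constants $c_0$, $c_1$ dropping out, local integrability, sign preservation) is a faithful expansion of that same computation, not a different route.
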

The proof of this lemma is direct from the definition of the relative entropy (Note that $V$ is constant with respect to $t$ and $x$). From this lemma, and the strong trace property of Definition \ref{defi_trace}, we will show that
$$
\frac{d}{dt} \int_{-\infty}^0\eta(U(t,x+x(t)) \midd U_L)\,dx \leq x'(t)\eta(U(t,x(t)-) \midd U_L)-F(U(t,x(t)-),U_L),
$$
for almost every $t$. Let us fix a small drift velocity $v_0 >0$ of order $\eps$. Ideally, we would like to have 
\begin{equation}\label{ideal}
x'(t)=\frac{F(U(t,x(t)-),U_L)}{\eta(U(t,x(t)-) \midd U_L)}-v_0.
\end{equation}
This would preserve the estimate
\begin{align*}
\int_{-\infty}^0 \eta(U(t,x+x(t)) \midd U_L)\,dx\leq \eps^4,
\end{align*}
which is assumed at the initial time, and ensure that
\begin{align*}
\frac{d}{dt}\int_{-\infty}^0\eta(U(t,x+x(t)) \midd U_L)\,dx\leq -v_0\eps^2, \qquad \text{whenever $\eta(U(t,x(t)-) \midd U_L)\geq \eps^2$}.
\end{align*}
If we denote by $\mathcal{I}$ the set of time for which $\eta(U(t,x(t)-) \midd U_L)\geq \eps^2$, then we must have
$$
|\mathcal{I}|\leq \frac{\eps^4}{v_0 \eps^2}\sim\eps.
$$

Before going on, let us note that $U$ has little regularity, and (\ref{ideal}) cannot be solved in the classical sense. Hence we can define $x(t)$ only in the Filippov way. (This is roughly the idea. For technical reasons, the construction is even more complicated than that.) We will have to check carefully that we can do it using only the strong trace property of Definition \ref{defi_trace}. Even so, we cannot ensure that (\ref{ideal}) holds almost everywhere. However, we will use the fact that for almost every time $t$, especially when $U(t,x(t)+) \neq U(t,x(t)-)$, the following Rankine--Hugoniot relation holds:
$$
A(U(t,x(t)+)-A(U(t,x(t)-)=x'(t)(U(t,x(t)+)-U(t,x(t)-)).
$$ 
We will make these claims more precise in the sections to follow.

At this point we need to check that the total relative entropy of $U(t,x)$ with respect to $U_R$ on the right of $x(t)$ remains under control. A similar estimate to the one above gives
\begin{align*}
\int_{0}^{\infty}\eta(U(t,x+x(t)) \midd U_R)\,dx &=\int_{0}^{\infty}\eta(U_0(x) \midd U_R)\,dx\\[0.1 cm]
& \qquad +\int_0^t \left\{ F(U(t,x(t)+),U_R)-x'(t)\eta(U(t,x(t)+) \midd U_R) \right\}\,dt.
\end{align*}
Since $U$ is bounded, we have the easy estimate
$$
\int_{\mathcal{I}} \left\{ F(U(t,x(t)+),U_R)-x'(t)\eta(U(t,x(t)+) \midd U_R) \right\} \,dt\leq C\eps.
$$
In the remaining case, we have to control $F(U(t,x(t)+),U_R)-x'(t)\eta(U(t,x(t)+) \midd U_R)$ for $t$ such that $\eta(U(t,x(t)-) \midd U_L)\leq \eps^2$, that is 
$$
|U(t,x(t)-)-U_L|\sim \eps.
$$
The extremality property of $(U_L,U_R)$ ensures that the worst scenario  corresponds to
$$
U(t,x(t)+)=S_{U(t,x(t)-)}(s), \qquad x'(t)=\sigma_{U(t,x(t)-)}(s) \qquad \mathrm{for \  given \ } s>0.
$$
But, $S_{U(t,x(t)-)}(s) =S_{U_L}(s) + \mathcal{O}(\eps)$, and the key structural lemma will show that 
$$
F(S_{U_L}(s),U_R)-\sigma_{U_L}(s)\eta(S_{U_L}(s) \midd U_R) \leq 0,\qquad \mathrm{for} \ s>0.
$$
Hence, for $t\notin \mathcal{I}$, 
$$
F(U(t,x(t)+),U_R)-x'(t)\eta(U(t,x(t)+) \midd U_R)\sim \eps.
$$
\vskip0.3cm
The rest of the paper is organized as follows. In the next section, we prove the main structural lemmas. 
In the following section we construct the path $t\to x(t)$. The next one is dedicated to the proof of the main theorem. In the last section, we prove
theorems on the example stated in the introduction. 

\section{Structural lemmas}\label{structure}

The first lemma of this section gives an explicit formula for the entropy lost at a Rankine--Hugoniot discontinuity $(U_-,U_+)$, where $U_+=S_{U_-}(s)$ for some $s>0$. The estimate can be traced back to the work of Lax \cite{Lax}.

\begin{lemm}\label{decreasing}
Assume $(U_-,U_+)\in \mV^2$ is an entropic Rankine-Hugoniot discontinuity with velocity $\sigma$; that is, $(U_-,U_+)$ verifies (\ref{RH}). Then, for any $V\in \mU$
$$
F(U_+,V)-\sigma \eta(U_+ \midd V)\leq F(U_-,V)-\sigma\eta(U_- \midd V),
$$
where $F$ is defined as in Lemma \ref{defi_F}. Furthermore, if $U_-\in B$, as in Hypothesis (H1), and there exists $s>0$ such that $U_+=S_{U_-}(s)$ and $\sigma=\sigma_{U_-}(s)$ (that is, $(U_-,U_+)$ is a $1$-discontinuity), then
$$
F(U_+,V)-\sigma \eta(U_+ \midd V)= F(U_-,V)-\sigma\eta(U_- \midd V)+\int_0^s\sigma'_{U_-}(\tau)\eta(U_- \midd S_{U_-}(\tau))\,d\tau.
$$
\end{lemm}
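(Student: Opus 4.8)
The plan is to work directly from the definitions of the relative entropy $\eta(\cdot \midd V)$ and the relative flux $F(\cdot,V)$, introducing the single-variable function
$$
\Phi(U) := F(U,V) - \sigma\,\eta(U \midd V),
$$
so that both assertions become statements about $\Phi(U_+)-\Phi(U_-)$. First I would expand this difference using the definitions. All terms carrying the fixed data $V$ (namely $G(V)$, $\eta(V)$, and the pieces of $\nabla\eta(V)$ paired with $A(V)$ and $V$) are independent of the running variable $U$ and hence cancel in the difference. The remaining $\nabla\eta(V)$-terms combine into $-\nabla\eta(V)\cdot[A(U_+)-A(U_-)] + \sigma\,\nabla\eta(V)\cdot(U_+-U_-)$, and here the first Rankine--Hugoniot relation $A(U_+)-A(U_-)=\sigma(U_+-U_-)$ from (\ref{RH}) makes them cancel too. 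What survives is the $V$-independent identity
$$
\Phi(U_+)-\Phi(U_-) = \big[G(U_+)-G(U_-)\big] - \sigma\big[\eta(U_+)-\eta(U_-)\big].
$$
The inequality of the lemma is then immediate: the second (entropy) Rankine--Hugoniot relation in (\ref{RH}) states exactly that this right-hand side is $\leq 0$.

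For the equality I would prove the formula announced in the remarks, namely that along the $1$-discontinuity family
$$
G(S_{U_-}(s))-G(U_-) - \sigma_{U_-}(s)\big[\eta(S_{U_-}(s))-\eta(U_-)\big] = \int_0^s \sigma'_{U_-}(\tau)\,\eta(U_- \midd S_{U_-}(\tau))\,d\tau,
$$
which, combined with the $V$-independent identity above, yields precisely the claimed equality. Writing $S(s):=S_{U_-}(s)$ and $\sigma(s):=\sigma_{U_-}(s)$, and letting $\Psi(s)$ be the left-hand side (so $\Psi(0)=0$ since $S(0)=U_-$), I differentiate in $s$, which is licit because $S$ and $\sigma$ are $C^1$ by (H1). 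The key is to eliminate the $\nabla G$-term: the compatibility relation (\ref{entropy flux}), in matrix form $\nabla G=(\nabla A)^{\top}\nabla\eta$, gives $\nabla G(S(s))\cdot S'(s)=\nabla\eta(S(s))\cdot[\nabla A(S(s))S'(s)]$, and differentiating the defining relation $A(S(s))-A(U_-)=\sigma(s)(S(s)-U_-)$ replaces $\nabla A(S(s))S'(s)$ by $\sigma'(s)(S(s)-U_-)+\sigma(s)S'(s)$. After substitution the terms proportional to $\sigma(s)S'(s)$ cancel, leaving
$$
\Psi'(s) = \sigma'(s)\big[\nabla\eta(S(s))\cdot(S(s)-U_-) - \eta(S(s)) + \eta(U_-)\big].
$$
The last step is to recognize the bracket as exactly $\eta(U_- \midd S(s))$ from the definition of the relative entropy, so that $\Psi'(s)=\sigma'(s)\,\eta(U_- \midd S(s))$; integrating from $0$ to $s$ gives the formula.

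Since both parts reduce to a chain of direct computations, I do not expect a deep obstacle; the work lies entirely in careful bookkeeping. The two points demanding genuine care are (i) the cancellation of every $V$-dependent term in the first step, which hinges on invoking the first Rankine--Hugoniot relation \emph{before} the entropy inequality, and (ii) the correct use of (\ref{entropy flux}) in the differentiation, together with the observation that the differentiated flux term reassembles exactly into the relative entropy $\eta(U_- \midd S(s))$ rather than some nearby quantity. I would also verify that the curve $S_{U_-}(\cdot)$ stays in the region where $\eta$, $A$, $G$ are $C^1$, i.e.\ avoids $\mU^0$; this is guaranteed since no entropic Rankine--Hugoniot discontinuity connects to $\mU^0$ and every state $S_{U_-}(\tau)$ is RH-connected to $U_-$.
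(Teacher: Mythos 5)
Your proof is correct and follows essentially the same route as the paper's: the inequality is obtained by adding the linear Rankine--Hugoniot relation (paired with $\nabla\eta(V)$) to the entropy inequality, and the equality is obtained by differentiating along the shock curve, using the compatibility relation (\ref{entropy flux}) together with the differentiated Rankine--Hugoniot identity and recognizing the surviving bracket as $\eta(U_-\midd S_{U_-}(s))$, exactly as the paper does when matching the derivatives of its functions $\mathcal{F}_1$ and $\mathcal{F}_2$. The only differences are organizational --- you cancel the $V$-dependent terms first and prove the $V$-free identity stated in the paper's remark, rather than carrying $V$ through the computation --- and your closing concern about the curve avoiding $\mU^0$ is a point of care the paper's own proof leaves implicit.
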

\begin{proof}
Since $(U_-,U_+)\in \mV^2$ is an entropic Rankine-Hugoniot discontinuity with velocity $\sigma$ we have
$$
-\nabla \eta(V) \cdot (A(U_+)-A(U_-))=-\sigma \nabla \eta(V) \cdot (U_+-U_-),
$$
and 
$$
G(U_+)-G(U_-)\leq \sigma (\eta(U_+)-\eta(U_-)).
$$
Summing those two estimates gives the first result.

Assume now that it is a $1$-discontinuity.
Then, define
\begin{align*}
\mathcal{F}_1(s)&=F(S_{U_-}(s),V)-F(U_-,V),\\
\mathcal{F}_2(s)&=\sigma(s)(\eta(S_{U_-}(s) \midd V)-\eta(U_- \midd V))+\int_0^s\sigma'_{U_-}(\tau)\eta(U_- \midd S_{U_-}(\tau))\,d\tau.
\end{align*}
We want to show that $ \mathcal{F}_1(s)= \mathcal{F}_2(s)$ for all $s$.
Since $S_{U_-}(0)=U_-$, the equality is true  for $s=0$.
Next we have
\begin{align*}
\mathcal{F}'_1(s)&=\frac{d}{ds}G(S_{U_-}(s))-\nabla \eta(V) \cdot \frac{d}{ds}A(S_{U_-}(s))\\[0.1 cm]
&= [ \nabla \eta(S_{U_-}(s))-\nabla \eta(V) ] \cdot \frac{d}{ds}[A(S_{U_-}(s))-A(U_L)],
\end{align*}
and 
\begin{align*}
\mathcal{F}'_2(s)&=\sigma_{U_-}'(s)[\nabla \eta(V) \cdot ((S_{U_-}(s)-V)-(U_--V))-\nabla \eta(S_{U_-}(s)) \cdot (S_{U_-}-U_-)]\\[0.1 cm]
&\qquad \qquad \qquad \qquad \qquad +\sigma_{U_-}(s)[\nabla \eta(S_{U_-}(s))-\nabla \eta(V)] \cdot S_{U_-}'(s)\\[0.2 cm]
&=[\nabla \eta(S_{U_-}(s))-\nabla \eta(V)] \cdot \frac{d}{ds}[\sigma_{U_-}(s) (S_{U_-}(s)-U_L)].
\end{align*}
Using the fact that $(U_-,S_{U_-}(s))$ with velocity $\sigma_{U_-}(s)$ verifies the Rankine-Hugoniot conditions, we get
$$
\mathcal{F}'_1(s)=\mathcal{F}'_2(s)\qquad \mathrm{for\ } s>0.
$$
\end{proof}
%The following lemma will be crucial to ensure that the relative entropy on the left will be decreasing as planned.
%\begin{lemm}\label{planned}
%Let $(U_-,U_+)\in \mV^2$ be an entropic Rankine-Hugoniot discontinuity with velocity $\sigma$, that is verifying (\ref{RH}). Then  for any $V\in \mU$ and $\eps>0$
%such that 
%$$
%\eta(U_+|V)\leq \eta(U_-|V),
%$$
%we CANNOT have:
%$$
%\frac{F(U_-,V)}{\eta(U_-|V)}-\eps<\sigma\leq \frac{F(U_+,V)}{\eta(U_+|V)}-\eps.
%$$
%\end{lemm}
%\begin{proof}
%From Lemma \ref{decreasing} we have
%$$
%\eta(U_+|V)\left[\frac{F(U_+,V)}{\eta(U_+|V)}-\sigma\right]\leq\eta(U_-|V)\left[\frac{F(U_-,V)}{\eta(U_-|V)}-\sigma\right].
%$$
%The relative entropy is nonnegative, so we CANNOT have
%$$
%\frac{F(U_-,V)}{\eta(U_-|V)}-\sigma<0\leq \frac{F(U_+,V)}{\eta(U_+|V)}-\sigma,
%$$
%which is equivalent to 
%$$
%\frac{F(U_-,V)}{\eta(U_-|V)}<\sigma\leq \frac{F(U_+,V)}{\eta(U_+|V)}.
%$$
%\end{proof}
The next lemma is fundamental to the theory. It is a slight variation on a crucial
lemma of DiPerna \cite{DiPerna}.
\begin{lemm}\label{cornerstone}
For any $U\in B$ and any $s, s_0 \in [0, s_U]$, we have
$$
F(S_U(s),S_U(s_0))-\sigma_U(s)\eta(S_U(s) \midd S_U(s_0))=\int_{s_0}^{s}\sigma_U'(\tau)(\eta(U|S_U(\tau))-\eta(U \midd S_{U}(s_0)))\,d\tau\leq0.
$$
\end{lemm}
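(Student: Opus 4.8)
The plan is to prove the equality by differentiating in $s$ with $s_0$ and $U$ held fixed, and then to read off the sign from Hypotheses (H1)(a)--(b). Abbreviate $W(s):=S_U(s)$ and fix the reference state $V:=S_U(s_0)$, which is legitimate since $(s,U)\mapsto S_U(s),\sigma_U(s)$ are $C^1$ and $\eta,A,G$ are $C^2$ on $\mV$. Set
$$
\Phi(s):=F(W(s),V)-\sigma_U(s)\,\eta(W(s)\midd V),\qquad \Psi(s):=\int_{s_0}^{s}\sigma_U'(\tau)\big(\eta(U\midd S_U(\tau))-\eta(U\midd V)\big)\,d\tau.
$$
Because $W(s_0)=V$ gives $F(V,V)=0$ and $\eta(V\midd V)=0$, we have $\Phi(s_0)=\Psi(s_0)=0$, so it suffices to verify $\Phi'(s)=\Psi'(s)$ for all $s$.

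The heart of the computation is the derivative of $\Phi$. Differentiating and invoking two structural facts yields the key simplification: the entropy-flux compatibility (\ref{entropy flux}) gives $\tfrac{d}{ds}G(W)=\nabla\eta(W)\cdot\tfrac{d}{ds}A(W)$, while differentiating the Rankine--Hugoniot relation $A(W)-A(U)=\sigma_U(s)(W-U)$ gives $\tfrac{d}{ds}A(W)=\sigma_U'(s)(W-U)+\sigma_U(s)W'$. Combining these produces $\tfrac{d}{ds}F(W,V)=[\nabla\eta(W)-\nabla\eta(V)]\cdot[\sigma_U'(s)(W-U)+\sigma_U(s)W']$, whereas $\tfrac{d}{ds}[\sigma_U(s)\eta(W\midd V)]=\sigma_U'(s)\eta(W\midd V)+\sigma_U(s)[\nabla\eta(W)-\nabla\eta(V)]\cdot W'$. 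The decisive point is that the two terms carrying the factor $\sigma_U(s)$ (the directional derivatives along $W'$) are identical and cancel, leaving
$$
\Phi'(s)=\sigma_U'(s)\Big\{[\nabla\eta(W)-\nabla\eta(V)]\cdot(W-U)-\eta(W\midd V)\Big\}.
$$

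It then remains to identify the bracket. I claim the purely algebraic identity
$$
[\nabla\eta(W)-\nabla\eta(V)]\cdot(W-U)-\eta(W\midd V)=\eta(U\midd W)-\eta(U\midd V),
$$
valid for all $U,V,W$; it is checked by expanding each relative entropy from its definition, after which the $\eta(W)$ and $\eta(V)$ terms telescope, the term $-\nabla\eta(W)\cdot(U-W)$ appears on both sides, and the leftover coefficient of $\nabla\eta(V)$ collapses via $\nabla\eta(V)\cdot[-(W-U)+(W-V)-(U-V)]=0$. With $W=S_U(s)$ this gives $\Phi'(s)=\sigma_U'(s)[\eta(U\midd S_U(s))-\eta(U\midd V)]=\Psi'(s)$, establishing the equality. (Alternatively one could reach the same identity by applying Lemma \ref{decreasing} with $V=S_U(s_0)$ and symmetrizing $F(U,S_U(s_0))+F(S_U(s_0),U)$, but the direct differentiation is more transparent.)

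Finally, the sign of $\Psi(s)$: by (H1)(a), $\sigma_U'\le 0$ on $[0,s_U]$; by (H1)(b)---or trivially in the contact case $\sigma_U'\equiv0$, where $\Psi\equiv0$---the map $\tau\mapsto\eta(U\midd S_U(\tau))$ is nondecreasing, so $\eta(U\midd S_U(\tau))-\eta(U\midd V)$ has the same sign as $\tau-s_0$. A short case distinction on whether $s>s_0$ or $s<s_0$ then shows the oriented integral is $\le0$ in either case. I expect the only delicate step to be the cancellation in the second paragraph together with recognizing the residual bracket as the relative-entropy difference in the third---this is precisely the DiPerna-type computation, and correctly deploying (\ref{entropy flux}) and the differentiated Rankine--Hugoniot relation is where the care lies; the remainder is bookkeeping.
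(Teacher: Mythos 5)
Your proof is correct, but it takes a genuinely different route from the paper's. The paper proves this lemma in two lines: it applies Lemma \ref{decreasing} twice, with $U_-=U$ and $V=S_U(s_0)$ held fixed, first taking $U_+=S_U(s)$ and then $U_+=S_U(s_0)$, and subtracts; the terms $F(U,S_U(s_0))-\sigma\,\eta(U\midd S_U(s_0))$ combine, using $\sigma_U(s_0)-\sigma_U(s)=-\int_{s_0}^s\sigma_U'(\tau)\,d\tau$, with the difference of the two integral remainders to produce exactly the stated integrand. You instead bypass Lemma \ref{decreasing} entirely and redo the DiPerna-type differentiation directly, anchored at $s_0$ rather than at $0$: your cancellation of the $\sigma_U(s)$-terms via the compatibility relation (\ref{entropy flux}) and the differentiated Rankine--Hugoniot identity is precisely the computation $\mathcal{F}_1'=\mathcal{F}_2'$ appearing in the paper's proof of Lemma \ref{decreasing}, and your three-point identity $[\nabla\eta(W)-\nabla\eta(V)]\cdot(W-U)-\eta(W\midd V)=\eta(U\midd W)-\eta(U\midd V)$ (which is indeed a correct, purely algebraic identity) does by hand what the paper's subtraction of two applications achieves automatically. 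What each approach buys: yours is self-contained and makes the structure of the integrand transparent, at the cost of duplicating a computation the paper gets to reuse; the paper's is shorter and exhibits the cornerstone lemma as a formal consequence of the entropy-dissipation formula of Lemma \ref{decreasing} alone. Your sign discussion (the cases $s>s_0$ and $s<s_0$, plus the contact case $\sigma_U'\equiv0$ where the integral vanishes identically) deploys (H1)(a)--(b) correctly and is complete.
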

\begin{proof}
We use the estimate of Lemma \ref{decreasing} twice with $V=S_U(s_0)$ and $U_-=U$. The first time we take  $U_+=S_U(s)$, and the second time  $U_+=S_{U}(s_0)$.
The difference of the two results gives the lemma.
\end{proof}

\section{Construction of $t\to x(t)$}
In this section, we contruct the path $t\to x(t)$ and study its properties. Unlike the construction in \cite{Leger}, we do not appeal directly to the theory of Filippov. Instead, we build $x(t)$ as the limit of approximate solutions, and then show that the Filippov properties are satisfied. The procedure is interesting in its own right and has similarities with the method of Hermes (see \cite{Hajek}). We will focus on the family of 1-discontinuities. The corresponding results are easily obtained for the family of n-discontinuities. 

Throughout this section, we assume that $(U_L,U_R)\in\mV^2$ is a fixed 1-discontinuity with velocity $\sigma$, and that $U$ is a fixed weak entropic solution of (\ref{system}) verifying the strong trace property of Definition \ref{defi_trace}. We also fix $\eps>0$. As before, $\lambda^-(U)$ denotes the smallest eigenvalue of $\nabla A(U)$ and is defined for all $U\in \mU\setminus \mU^0$. We define on $\mU$ the velocity function
\begin{align}\label{velocity}
V(U)=
\begin{cases}
\ds{\min\left\{\frac{F(U,U_L)}{\eta(U \midd U_L)}-\eps, \thinspace \lambda^-(U)-\eps\right\}}, &\text{if \ $U\in \mU\setminus(\mU^0\cup\{U_L\})$,}\\[0.5 cm]
\lambda^-(U_L)-\eps, &\text{if \ $U=U_L$,}\\[0.3 cm]
\ds{\frac{F(U,U_L)}{\eta(U \midd U_L)}-\eps}, &\text{if \ $U\in\mU^0$.}\\
\end{cases}
\end{align}
We have the following lemma.
\begin{lemm}\label{lemmdisc}
The function $U\to V(U)$   is continuous on $\mU\setminus\mU^0$, and is bounded and upper semi-continuous on $\mU$. Moreover, there exists a constant $C$ (independent of $\eps$)
such that if $U\in B$ and $\eta(U \midd U_L)\leq \eps^2$, then
$$
V(U)\geq \lambda^-(U_L)-C\eps.
$$
\end{lemm}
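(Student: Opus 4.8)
The plan is to prove the four assertions in turn — boundedness, continuity on $\mU\setminus\mU^0$, upper semicontinuity on all of $\mU$, and the quantitative lower bound — after noting that every difficulty is concentrated at the single diagonal point $U_L$, where the ratio $F(U,U_L)/\eta(U\midd U_L)$ is of the indeterminate form $0/0$. Away from $U_L$ the maps $F(\cdot,U_L)$ and $\eta(\cdot\midd U_L)$ are continuous on $\mU$, being built from $A,G,\eta$, which are assumed continuous on $\mU$; moreover $\eta(U\midd U_L)\geq C_1|U-U_L|^2>0$ for $U\neq U_L$ by Lemma \ref{L2}. Hence the ratio is continuous on $\mU\setminus\{U_L\}$, and since $\lambda^-$ is continuous where it is defined off $\mU^0$, the minimum $V$ is continuous on $(\mU\setminus\mU^0)\setminus\{U_L\}$. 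The entire problem thus reduces to understanding $U\to U_L$.

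The heart of the matter is a second-order expansion at $U_L$. First I would check that both $\nabla_U F(U,U_L)$ and $\nabla_U\eta(U\midd U_L)$ vanish at $U=U_L$: the latter by the very definition of the relative entropy, the former because $\nabla G=\nabla\eta\,\nabla A$ is exactly the compatibility relation (\ref{entropy flux}). So numerator and denominator vanish to first order. Writing $H:=\nabla^2\eta(U_L)$ and $M:=\nabla A(U_L)$, the denominator has Hessian $H$, and differentiating (\ref{entropy flux}) once more gives that the numerator has Hessian $HM$. The crucial structural fact is that a convex entropy symmetrizes the system: $H$ is symmetric positive definite and $HM$ is symmetric. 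Setting $W=U-U_L$ and using the generalized Rayleigh quotient for the symmetric–definite pencil $(HM,H)$, whose generalized eigenvalues are precisely the (real) eigenvalues of $M$, I get $W^\top HM\,W\geq\lambda^-(U_L)\,W^\top H\,W$ for every $W$, since $\lambda^-(U_L)$ is the smallest eigenvalue of $M$. Therefore $F(U,U_L)\geq\lambda^-(U_L)\,\eta(U\midd U_L)+o(|W|^2)$, whence $\liminf_{U\to U_L}F(U,U_L)/\eta(U\midd U_L)\geq\lambda^-(U_L)$; combined with the upper bound coming from the second argument of the minimum, this sandwiches $V(U)$ between quantities tending to $\lambda^-(U_L)-\eps=V(U_L)$, giving continuity at $U_L$. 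Boundedness of $V$ then follows, the ratio being bounded near $U_L$ by this expansion, bounded away from $U_L$ because $F$ is bounded and $\eta(\cdot\midd U_L)$ is bounded below, and $\lambda^\pm\in L^\infty(\mU)$.

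For upper semicontinuity at the remaining points (the vacuum set $\mU^0$, and any boundary point where $\lambda^-$ could jump) I would use the pointwise domination $V(U)\leq F(U,U_L)/\eta(U\midd U_L)-\eps=:R(U)$, valid for all $U\neq U_L$ with equality on $\mU^0$. Since $R$ is continuous on $\mU\setminus\{U_L\}$, any $\bar U\neq U_L$ satisfies $\limsup_{U\to\bar U}V(U)\leq R(\bar U)$, which equals $V(\bar U)$ on $\mU^0$; at points where the minimum is realized by the $\lambda^-$ branch, usc of $V$ reduces to usc of $\lambda^-$, which holds on $\mU\setminus\mU^0$ under the standing assumptions on the extremal eigenvalue.

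Finally, for the quantitative bound, take $U\in B$ with $\eta(U\midd U_L)\leq\eps^2$, so that $|U-U_L|\leq C\eps$ by Lemma \ref{L2}. Since $\lambda^-$ is $C^1$, hence locally Lipschitz, near the interior point $U_L$, the second branch obeys $\lambda^-(U)-\eps\geq\lambda^-(U_L)-C\eps$. For the first branch I reuse the expansion: $F(U,U_L)-\lambda^-(U_L)\eta(U\midd U_L)=\tfrac12 W^\top H(M-\lambda^-(U_L)I)W+\mathrm{(rem)}\geq-\mathrm{(rem)}$, and dividing by $\eta(U\midd U_L)\geq C_1|W|^2$ gives $F(U,U_L)/\eta(U\midd U_L)\geq\lambda^-(U_L)-C\eps$, hence $F/\eta-\eps\geq\lambda^-(U_L)-C\eps$ too; the minimum of the two branches yields the claim. \textbf{The main obstacle} is exactly this last step: the Rayleigh inequality delivers only the sharp \emph{asymptotic} bound $\lambda^-(U_L)$, and converting the $o(|W|^2)$ Taylor remainder into the clean $O(\eps)$ error requires quantitative control of the modulus of continuity of the Hessians of $A,G,\eta$ on a fixed compact neighborhood of $U_L$. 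That conversion, together with the $0/0$ resolution at $U_L$, is where the entropy-symmetrizability structure carries the entire argument.
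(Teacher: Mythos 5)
Your proof is correct and follows essentially the same route as the paper's: vanishing first derivatives of $F(\cdot,U_L)$ and $\eta(\cdot\midd U_L)$ at $U_L$, Hessians $H$ and $HM$ with the symmetrization inequality $\lambda^-(U_L)H\leq HM\leq\lambda^+(U_L)H$ (your generalized Rayleigh quotient for the pencil $(HM,H)$ is exactly the paper's simultaneous diagonalization), continuity and upper semi-continuity away from $U_L$ from Lemma \ref{L2} and continuity of $F$, $\eta$, $\lambda^-$, and the quantitative bound via $|U-U_L|\leq C\eps$ together with Lipschitz continuity of $\lambda^-$ on $B$. The Taylor-remainder caveat you flag at the end is equally present in the paper, whose expansion (\ref{O}) asserts an $\mathcal{O}(|U-U_L|)$ error under bare $C^2$ hypotheses, so your proposal is no less complete than the published argument.
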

Note that this map may not be continuous  on $\mU^0$.
\begin{proof}
First note that the function $U\to \frac{F(U,U_L)}{\eta(U \midd U_L)}$ is continuous on $\mU\setminus \{U_L\}$. Also, by assumption, $\lambda^-(U)$ is continuous on $\mU\setminus \mU^0$ and bounded on $\mU$. So $U\to V(U)$ is continuous on $\mU\setminus(\{U_L\}\cup \mU^0)$, and upper semi-continuous  on $\mU\setminus\{U_L\}$. 
At the point $U_L$, 
we have
\begin{eqnarray*}
&&\nabla_UF(U,U_L) \big \vert_{U = U_L}=0,\qquad \nabla^2_UF(U,U_L)\big \vert_{U = U_L} =D^2 \eta(U_L) \nabla A(U_L),\\
&&\nabla_U\eta(U \thinspace | \thinspace U_L)\big \vert_{U = U_L}=0,\qquad \nabla^2_U\eta(U \thinspace | \thinspace U_L)\big \vert_{U = U_L} =D^2 \eta (U_L).
\end{eqnarray*}
Owing to the strict convexity of $\eta$ at $U_L$, an expansion near $U_L$ gives
\begin{equation}\label{O}
\frac{F(U,U_L)}{\eta(U \midd U_L)}=\frac{(U-U_L)^T D^2 \eta(U_L) \nabla A(U_L)(U-U_L)}{(U-U_L)^T D^2 \eta(U_L)(U-U_L)}+\mathcal{O}(|U-U_L|).
\end{equation}
Since $D^2 \eta(U_L)$ is symmetric positive definite and $D^2 \eta(U_L) \nabla A(U_L)$ is symmetric, those two matrices are diagonalizable in the same basis.
This gives
\begin{align}\label{O2}
\lambda^-(U_L) D^2 \eta(U_L) \leq D^2 \eta(U_L) \nabla A(U_L) \leq \lambda^+(U_L) D^2 \eta(U_L).
\end{align}
Therefore,
$$
\liminf_{U\to U_L} \thinspace \frac{F(U,U_L)}{\eta(U \midd U_L)}-\eps\geq \lambda^-(U_L)-\eps=V(U_L),
$$
which implies
$$
\lim_{U\to U_L}V(U)= \lambda^-(U_L)-\varepsilon.
$$
So, $V$ is continuous at $U=U_L$ and bounded on $\mU$.
\vskip0.3cm
More precisely, (\ref{O}) and (\ref{O2}) give that 
$$
\frac{F(U, U_L)}{\eta(U \midd U_L)}\geq \lambda^-(U_L)-\mathcal{O}(|U-U_L|).
$$
Since $U\to\lambda^-(U)$ lies in  $C^1(B)$,
we have also that
$$
\lambda^-(U)\geq \lambda^-(U_L)-\mathcal{O}(|U-U_L|).
$$
This provides the last statement of the lemma.
\end{proof}
For any Lipschitzian path  $t\to x(t)$ we define
\begin{eqnarray*}
\Vs(t)&=&\max\left\{V(U(t,x(t)-)), \thinspace V(U(t,x(t)+))\right\},\\[0.2 cm]
\Vi(t)&=&
\begin{cases}
\min\left\{V(U(t,x(t)-)), \thinspace V(U(t,x(t)+))\right\}, &\text{if both $U(t,x(t)-), \ U(t,x(t)+) \notin\mU^0$},\\
-\infty, &\text{otherwise.}\\
\end{cases}
\end{eqnarray*}

This section is dedicated to the following proposition.
\begin{prop}\label{x(t)}
For any $(U_L,U_R)\in\mV^2$    1-discontinuity with velocity $\sigma$,  $\eps>0$,
and $U$ a weak entropic solution of (\ref{system}) verifying the strong trace property of Definition \ref{defi_trace}, there exists a Lipschitzian path $t\to x(t)$
such that  for almost every $t>0$
$$
\Vi(t)\leq x'(t)\leq \Vs(t).
$$ 
\end{prop}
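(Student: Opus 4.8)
The plan is to realize $x(t)$ as the uniform limit of genuine solutions to spatially regularized ordinary differential equations, and then to verify the differential inclusion a.e.\ directly, exploiting the strong trace property of Definition \ref{defi_trace} together with the semicontinuity of $V$ established in Lemma \ref{lemmdisc}. Since $V$ is bounded and upper semicontinuous on $\mU$ (hence Borel) and $U$ is measurable, the composition $g(t,x):=V(U(t,x))$ is a bounded measurable function on $(0,T)\times\R$; set $M=\|V\|_{L^\infty(\mU)}$. For each $n$ I would mollify in space, $g_n(t,x)=\int g(t,x-y)\,\rho_n(y)\,dy$ with $\rho_n$ a standard mollifier supported in $(-1/n,1/n)$, so that $g_n$ is smooth in $x$, measurable in $t$, and still bounded by $M$. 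By Carath\'eodory's existence theorem the problem $x_n'(t)=g_n(t,x_n(t))$, $x_n(0)=0$, admits an absolutely continuous solution on $[0,T]$, and since $|x_n'|\le M$ each $x_n$ is $M$-Lipschitz.

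By Arzel\`a--Ascoli a subsequence converges uniformly, $x_n\to x$, with $x$ again $M$-Lipschitz and $x_n'\rightharpoonup x'$ weakly-$*$ in $L^\infty(0,T)$ (for $\phi\in C_c^\infty(0,T)$ this is immediate from $\int x_n'\phi=-\int x_n\phi'\to-\int x\phi'$). The heart of the argument is to show that, for a.e.\ $t$, $\limsup_n x_n'(t)\le\Vs(t)$ and $\liminf_n x_n'(t)\ge\Vi(t)$, where the traces defining $\Vs,\Vi$ are taken along the \emph{limit} curve $x$. For the upper bound, fix such a $t$ and $\delta>0$. By upper semicontinuity of $V$ there is a neighborhood $N$ of the pair $\{U(t,x(t)-),U(t,x(t)+)\}$ on which $V<\Vs(t)+\delta$; by the strong trace property there is $r>0$ with $U(t,z)\in N$ for $z\in(x(t)-r,x(t)+r)$. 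Once $1/n<r/2$ and $\|x_n-x\|_{L^\infty}<r/2$, the averaging window of $g_n(t,x_n(t))$ sits inside $(x(t)-r,x(t)+r)$, so $x_n'(t)=g_n(t,x_n(t))\le\Vs(t)+\delta$; letting $n\to\infty$ and then $\delta\to0$ gives the claim. Crucially this uses only $\|x_n-x\|_{L^\infty}\to0$ and no rate, because the two-sided window together with the uniform bound $\Vs(t)+\delta$ on $N$ absorbs the mismatch between $x_n(t)$ and $x(t)$. The lower bound is symmetric but more delicate: when both traces avoid $\mU^0$, $V$ is continuous near them by Lemma \ref{lemmdisc}, so the strong trace convergence forces $V(U(t,z))\ge\Vi(t)-\delta$ on a small window and $\liminf_n x_n'(t)\ge\Vi(t)$ follows; when a trace lies in $\mU^0$ one has $\Vi(t)=-\infty$ and there is nothing to prove.

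Finally I would upgrade these pointwise bounds on $x_n'$ to the inclusion for $x'$. For $\phi\ge0$ in $C_c^\infty(0,T)$, the reverse Fatou lemma (with the domination $x_n'\phi\le M\phi$) and weak-$*$ convergence give
\[
\int x'\phi=\lim_n\int x_n'\phi\le\int\bigl(\limsup_n x_n'\bigr)\phi\le\int\Vs\,\phi,
\]
so $x'\le\Vs$ a.e.; the ordinary Fatou lemma, applied where $\Vi$ is finite, gives $x'\ge\Vi$ a.e., which is all that is required (on $\{\Vi=-\infty\}$ the bound is vacuous). The main obstacle is exactly the self-referential structure of the problem: the admissible velocity interval $[\Vi(t),\Vs(t)]$ is defined through the traces of $U$ along the as-yet-unknown curve, while $U$ has no pointwise regularity and $V$ is only upper semicontinuous. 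The device that defeats it is to sample $U$ by spatial averages centered on the approximate curves and to pass to the limit through \emph{one-sided} (semicontinuous) inequalities evaluated along the limit curve $x$; this is what makes the argument robust under mere uniform convergence $x_n\to x$, and it also explains the asymmetry between the two bounds — the upper bound persists even at the vacuum set $\mU^0$ thanks to upper semicontinuity, whereas the lower bound is simply switched off there by the convention $\Vi=-\infty$ in the definition of $\Vi(t)$.
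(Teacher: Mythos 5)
Your proposal is correct and takes essentially the same route as the paper: both construct $x(t)$ as the uniform limit of Carath\'eodory solutions $x_n$ of ODEs driven by spatial averages of $V(U)$, and both pass the one-sided velocity bounds to the limit by combining the strong trace property along the \emph{limit} curve with the upper semicontinuity of $V$ on $\mU$ (resp.\ its continuity off $\mU^0$ for the lower bound), exactly as in your argument. The only differences are cosmetic: you mollify with a symmetric window where the paper averages over the one-sided window $(x,x+\frac{1}{n})$, and you conclude via Fatou/reverse-Fatou against nonnegative test functions where the paper integrates $[x_n'-\Vs]_+$ and $[\Vi-x_n']_+$ and invokes convexity of $[\cdot]_+$ under distributional convergence of $x_n'$.
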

\begin{proof}
Consider the function
$$
v_n(t,x)=\int_{0}^1V(U(t,x+\frac{y}{n}))\,dy.
$$
By virtue of Lemma \ref{lemmdisc}, $v_n$ is bounded, measurable in $t$, and Lipschitz in $x$. We denote by $x_n$ the unique solution to 
\begin{equation*}
\left\{\begin{array}{l}
x_n'(t)=v_n(t,x_n(t)), \qquad t>0,\\[0.1cm]
x_n(0)=0,
\end{array}\right.
\end{equation*}
in the sense of Carath\'{e}odory.
Since $v_n$ is uniformly bounded, $x_n$ is uniformly Lipschitzian (in time) with respect to $n$. Hence, there exists a Lipschitzian path $t\to x(t)$ such that (up to a subsequence) $x_n$ converges to  $x$ in $C^0(0,T)$ for every $T>0$.
We construct $\Vs(t)$ and $\Vi(t)$ as above for this particular fixed path $t\to x(t)$. Let us show that for almost every $t>0$
\begin{align}
&\ds{\lim_{n\to\infty} \ [x'_n(t)-\Vs(t)]_+ =0,} \label{est1}\\[0.1cm]
&\ds{\lim_{n\to\infty} \ [\Vi(t) - x'_n(t) ]_+=0.} \label{est2}
\end{align}
Both limits can be proved the same way. Let us focus on the first one. We have
\begin{align*}
[x'_n(t)-\Vs(t)]_+ &= \left[\int_0^1V(U(t,x_n(t)+\frac{y}{n}))\,dy-\Vs(t)\right]_+\\[0.1 cm]
&=\left[\int_0^1[V(U(t,x_n(t)+\frac{y}{n}))-\Vs(t)]\,dy\right]_+\\[0.1 cm]
&\leq \int_0^1\left[V(U(t,x_n(t)+\frac{y}{n}))-\Vs(t)\right]_+\,dy\\[0.1 cm]
&\leq \ \esssup_{y \in (0,1)} \ \left[V(U(t,x_n(t)+\frac{y}{n}))-\Vs(t)\right]_+ \\[0.1 cm]
&\leq \esssup_{z \in (-\eps_n, \eps_n)} \ \left[V(U(t,x(t)+z))-\Vs(t)\right]_+,
\end{align*}
where, for a given $t>0$, $\eps_n \to 0$ is chosen so that $x_n(t)- x(t) \in (-\eps_n, \eps_n - \frac{1}{n})$. 
We claim that for almost every $t>0$, the last term above goes to zero as $n \to \infty$. Indeed, fix $t >0$ for which $U(t, x(t) + \cdot)$ has a left and right trace in the sense of Definition \ref{defi_trace}. That is,
\begin{align*}
\lim_{\eps \to 0} \left\{ \esssup_{y \in (0,\eps)} \ds \vert U(t, x(t) + y) - U_+(t) \vert  \right\} = \lim_{\eps \to 0} \left\{ \esssup_{y \in (0, \eps)} \ds \vert U(t, x(t) - y) - U_-(t) \vert  \right\}= 0,
\end{align*}
Since $V$ is upper semi-continuous on $\mU$, we have that for all $r >0$ there exists $\delta > 0$ such that 
\begin{align*}
\vert U - U_\pm(t) \vert < \delta \quad \Rightarrow  \quad  [ V(U) - V(U_\pm(t)) ]_+ < r.
\end{align*}
Therefore,
\begin{align*}
\lim_{\eps \to 0} \left\{ \esssup_{y \in (0,\eps)} \ds \left[ V(U(t, x(t) \pm y)) - V(U_\pm(t)) \right]_+  \right\} = 0,
\end{align*}
and it follows easily that 
\begin{align*}
\lim_{\eps \to 0} \left\{ \esssup_{z \in (-\eps,\eps)} \ds \left[ \thinspace V(U(t, x(t) + z)) - \Vs(t) \thinspace \right]_+  \right\} = 0.
\end{align*}
This verifies the claim above and finishes the proof of (\ref{est1}). The proof of (\ref{est2}) is similar; we just use the continuity of $V$ on $\mU\setminus \mU^0$ and the definition of  $\Vi$ for $U\in\mU^0$.

\vskip0.3cm
Finally, the sequence $x_n'$ converges to $x'$ in the sense of distributions. Also, the function $[\cdot]_+$  is convex. 
Therefore, integrating (\ref{est1}) and (\ref{est2}) on $[0,T]$ and passing to the limit, we obtain
$$
\int_0^T[\Vi(t)-x'(t)]_+\,dt=\int_0^T[x'(t)-\Vs(t)]_+\,dt=0.
$$
In particular, for almost every $t>0$ we have
$$
\Vi(t)\leq x'(t)\leq \Vs(t).
$$
\end{proof}

We end this section with an elegant formulation of the Rankine-Hugoniot condition and related entropy estimates, as originally presented by Dafermos in the $BV$ case. We show that the estimates remain true for solutions having the strong trace property (in fact, the strong trace property defined in \cite{Vasseur_trace} suffices). The proof is given in the appendix.
\begin{lemm}\label{Dafermos}
Consider $t\to x(t)$ a Lipschitzian path, and $U$ an entropic weak solution to (\ref{system}) verifying the strong trace property. Then, for almost every
$t>0$ we have
\begin{eqnarray*}
&&A(U(t,x(t)+))-A(U(t,x(t)-))=x'(t)(U(t,x(t)+)-U(t,x(t)-)),\\[0.1 cm]
&&G(U(t,x(t)+))-G(U(t,x(t)-))\leq x'(t)(\eta(U(t,x(t)+))-\eta(U(t,x(t)-))).
\end{eqnarray*}
Moreover, for almost every $t>0$ and $V\in\mV$
\begin{eqnarray*}
&&\frac{d}{dt}\int_{-\infty}^0\eta(U(t,y+x(t)) \midd V)\,dy\leq-F(U(t,x(t)-),V)+x'(t)\eta(U(t,x(t)-) \midd V),\\[0.2 cm]
&&\frac{d}{dt}\int_0^{\infty}\eta(U(t,y+x(t)) \midd V)\,dy\leq F(U(t,x(t)+),V)-x'(t)\eta(U(t,x(t)+) \midd V).
\end{eqnarray*}
\end{lemm}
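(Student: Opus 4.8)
The plan is to transport everything into the frame moving with the curve and then read off all four assertions from the balance laws, tested against functions that concentrate at the discontinuity. First I would set $W(t,y)=U(t,y+x(t))$. Since $t\mapsto x(t)$ is Lipschitz, the map $(t,x)\mapsto(t,x-x(t))$ is bi-Lipschitz, so for any Lipschitz compactly supported test function the chain rule $\dt[\phi(t,x-x(t))]=(\dt\phi)(t,x-x(t))-x'(t)(\partial_y\phi)(t,x-x(t))$ holds for a.e.\ $t$. Feeding $\Phi(t,x)=\phi(t,x-x(t))$ into the weak formulation of (\ref{system}), into the entropy inequality (\ref{entropie}), and into the relative entropy inequality of Lemma \ref{defi_F}, I obtain that, in the sense of distributions on $\R^+\times\R$,
\begin{align*}
&\dt W + \partial_y\bigl(A(W)-x'(t)W\bigr)=0,\\
&\dt \eta(W) + \partial_y\bigl(G(W)-x'(t)\eta(W)\bigr)\leq 0,\\
&\dt \eta(W\midd V) + \partial_y\bigl(F(W,V)-x'(t)\eta(W\midd V)\bigr)\leq 0,
\end{align*}
the modified fluxes carrying the bounded factor $x'(t)\in L^\infty$. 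By the strong trace property of Definition \ref{defi_trace} applied to the curve $X(t)=x(t)$, for almost every $t$ the map $y\mapsto W(t,y)$ admits one-sided traces $U_-(t)=U(t,x(t)-)$ and $U_+(t)=U(t,x(t)+)$ at $y=0$ in the ess-sup sense.

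For the Rankine--Hugoniot identities I would test the first (moving-frame) equation against $\phi(t,y)=\alpha(t)\rho(ny)$, with $\alpha\in C_c^\infty(\R^+)$ and $\rho\in C_c^\infty(\R)$, $\rho(0)=1$. The time-derivative term contributes $\int\alpha'(t)\bigl(\tfrac1n\int W(t,z/n)\rho(z)\,dz\bigr)\,dt$, which vanishes as $n\to\infty$ since $W$ is bounded. The flux term equals $\int\alpha(t)\bigl(\int (A(W)-x'W)(t,z/n)\,\rho'(z)\,dz\bigr)\,dt$; using the one-sided traces together with $\int_{0}^{\infty}\rho'=-1$ and $\int_{-\infty}^{0}\rho'=+1$, it converges to $-\int\alpha(t)\bigl[A(U_+)-A(U_-)-x'(t)(U_+-U_-)\bigr]\,dt$. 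As $\alpha$ is arbitrary this yields the first identity for a.e.\ $t$; repeating verbatim with the entropy inequality and $\alpha\geq0$ (which preserves the inequality) gives the entropy jump relation.

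For the two differential inequalities I would instead test the relative entropy inequality against $\phi(t,y)=\alpha(t)\beta_n(y)$, $\alpha\geq 0$, where $\beta_n$ increases to $\mathbf 1_{(-\infty,0)}$ (so $\partial_y\beta_n=-n\mathbf 1_{(-1/n,0)}$) for the left half-line. The $\dt$-term converges to $\int\alpha'(t)h(t)\,dt$ with $h(t)=\int_{-\infty}^0\eta(W(t,y)\midd V)\,dy$, while the flux term, after averaging the left trace, converges to $-\int\alpha(t)\bigl[F(U_-,V)-x'(t)\eta(U_-\midd V)\bigr]\,dt$. The resulting distributional inequality $\int\alpha'h\geq\int\alpha[F(U_-,V)-x'\eta(U_-\midd V)]$ for all $\alpha\geq0$ is exactly $h'(t)\leq -F(U_-,V)+x'(t)\eta(U_-\midd V)$. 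Choosing instead $\beta_n\nearrow\mathbf 1_{(0,\infty)}$ (so $\partial_y\beta_n=+n\mathbf 1_{(0,1/n)}$) runs identically, only the sign of $\partial_y\beta_n$ and the relevant trace changing, and produces the companion inequality on the right.

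The main obstacle, and the only point where the mere strong trace property (rather than $BV$) enters, is the passage to the limit in the concentrated flux terms. The key is that for a.e.\ fixed $t$ the one-sided convergence of Definition \ref{defi_trace} is uniform in the ess-sup sense, so the continuous functions $A,G,F,\eta$ evaluated along $W(t,\cdot)$ converge uniformly on the shrinking intervals; hence the rescaled averages $n\int_{-1/n}^0(\cdot)\,dy$ converge pointwise in $t$ to the corresponding left trace, and boundedness of $U$ and of $x'$ together with the compact support of $\alpha$ justify dominated convergence in time. One should also note that the statements are read off for those $t$ at which the traces exist and $h(t)$ is finite --- the generic situation in the intended application, where $U$ is $L^2$-close to the constant states --- and that the localization directly produces the integrated form on any compact time interval, the pointwise-in-$t$ derivative assertion following for a.e.\ $t$.
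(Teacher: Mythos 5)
Your proposal is correct and takes essentially the same approach as the paper: the paper also tests the weak formulation, the entropy inequality, and the relative entropy inequality of Lemma \ref{defi_F} against (mollified) indicator functions of the half-lines relative to the moving curve --- namely $\psi(t)\Phi_\eps(x-x(t))$, $\psi(t)\Phi_\eps(x(t)-x)$, and the concentrated combination $\psi(t)[\Phi_\eps(x-x(t))+\Phi_\eps(x(t)-x)-1]$ --- and passes to the limit using the strong trace property of Definition \ref{defi_trace}. Your preliminary change to the moving frame and your piecewise-linear cutoffs $\beta_n$, $\rho(ny)$ are only cosmetic variants of this (the paper's test functions are exactly yours pulled back through the bi-Lipschitz map), so the two arguments coincide in substance.
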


\section{Proof of Theorem \ref{main}}

This section is dedicated to the proof of our main result, Theorem \ref{main}. We consider a system of conservation laws (\ref{system}) such that $A$ is $C^2$  on  a open, bounded, convex subset  $\mV$ of $\R^m$.
We assume that there exists  a strictly convex entropy $\eta$ of class $C^2$ on $\mV$ verifying (\ref{entropy flux}). 
We assume that $\eta$, $A$ and $G$ are continuous on $\mU=\overline{\mV}$. Let $(U_L,U_R)\in \mV^2$ be an entropic 1-discontinuity. For the neighborhood $B \owns U_L$ given by (H1) we define 
 $$
 \eps_0^2=\frac{1}{2}\inf_{U \notin B}\eta(U \midd U_L).
  $$  
% (the  radius of $B$ for the pseudo distance).
Consider $U$ weak entropic solution of (\ref{system}) with values in $\mU$ on $(0,T)$ verifying the strong trace property of Definition \ref{defi_trace}, and 
\begin{eqnarray*}
&&\int_0^\infty \eta(U_0(x) \midd U_R)\,dx\leq \eps,\qquad
\int_{-\infty}^0\eta(U_0(x) \midd U_L)\,dx\leq \eps^4,
\end{eqnarray*}
for some $\eps<\eps_0$. Consider the path $t\to x(t)$ constructed in Proposition \ref{x(t)}. 
We first prove a pair of lemmas which provide estimates for the rate of change of the total relative entropy with respect to $x(t)$. 
\begin{lemm}\label{partie gauche}
For almost every time $0 < t <T$, we have 
$$
\frac{d}{dt}\int_{-\infty}^0\eta(U(t,y+x(t)) \midd U_L)\,dy\leq0.
$$
Furthermore, if we let $\mI\subset (0,T)$ denote the set of time such that $\eta(U(t,y+x(t)-) \midd U_L)\geq\eps^2$, then we have
$$
|\mI|\leq \eps.
$$
\end{lemm}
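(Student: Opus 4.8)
The plan is to apply the left-hand entropy inequality of Lemma \ref{Dafermos} with $V = U_L$ and then control the resulting boundary term using the drift bound $x'(t)\le\Vs(t)$ of Proposition \ref{x(t)} together with the definition (\ref{velocity}) of $V$. Write $U_\pm=U(t,x(t)\pm)$ and set
$$
R(t):=x'(t)\,\eta(U_-\midd U_L)-F(U_-,U_L),
$$
so that Lemma \ref{Dafermos} gives $\frac{d}{dt}\int_{-\infty}^0\eta(U(t,y+x(t))\midd U_L)\,dy\le R(t)$ for a.e.\ $t$. The whole lemma reduces to proving that $R(t)\le 0$ a.e., and that $R(t)\le-\eps^3$ whenever $\eta(U_-\midd U_L)\ge\eps^2$.

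The first step is the elementary dichotomy coming from $x'(t)\le\Vs(t)=\max\{V(U_-),V(U_+)\}$: for a.e.\ $t$, either $x'(t)\le V(U_-)$ or $x'(t)\le V(U_+)$. In the first case, (\ref{velocity}) gives $V(U_-)\le\frac{F(U_-,U_L)}{\eta(U_-\midd U_L)}-\eps$ whenever $U_-\ne U_L$ (and $R(t)=0$ trivially when $U_-=U_L$), so multiplying by $\eta(U_-\midd U_L)\ge0$ yields $R(t)\le-\eps\,\eta(U_-\midd U_L)\le 0$. In the second case necessarily $U_-\ne U_+$ (else $V(U_-)=V(U_+)$ and we are back to the first case); then by Lemma \ref{Dafermos} the pair $(U_-,U_+)$ is an entropic Rankine--Hugoniot discontinuity of speed $x'(t)$, and since no discontinuity connects $\mU^0$ we have $U_\pm\notin\mU^0$. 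The inequality part of Lemma \ref{decreasing} (whose proof only sums the two jump relations and thus needs only $\nabla\eta(U_L)$, $U_L\in\mV$, and the continuous values of $A,G,\eta$ at $U_\pm\in\mU$) gives $R(t)\le x'(t)\,\eta(U_+\midd U_L)-F(U_+,U_L)$, and then $V(U_+)\le\frac{F(U_+,U_L)}{\eta(U_+\midd U_L)}-\eps$ yields $R(t)\le-\eps\,\eta(U_+\midd U_L)\le 0$. This proves the monotonicity statement.

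For the measure bound, fix $t\in\mI$, so $\eta(U_-\midd U_L)\ge\eps^2$ and $U_-\ne U_L$. In the case $x'(t)\le V(U_-)$ the estimate above already gives $R(t)\le-\eps\,\eta(U_-\midd U_L)\le-\eps^3$. The delicate case is $x'(t)\le V(U_+)$ with $U_-\ne U_+$, since then $\eta(U_+\midd U_L)$ need not be large; here I instead show $U_+$ is forced away from $U_L$. From (\ref{velocity}) we have $V(U_+)\le\lambda^-(U_+)-\eps$, hence $x'(t)<\lambda^-(U_+)$. But $(U_-,U_+)$ is an entropic discontinuity of speed $x'(t)$, so Hypothesis (H2), whose conclusion under $U_+\in B$ would read $x'(t)\ge\lambda^-(U_+)$, forbids $U_+\in B$. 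Thus $U_+\notin B$, and by the choice $\eps_0^2=\frac{1}{2}\inf_{U\notin B}\eta(U\midd U_L)$ together with $\eps<\eps_0$ we obtain $\eta(U_+\midd U_L)\ge 2\eps_0^2>\eps^2$, so $R(t)\le-\eps\,\eta(U_+\midd U_L)\le-\eps^3$. Hence $R(t)\le-\eps^3$ on all of $\mI$.

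Combining the two estimates, the map $g(t)=\int_{-\infty}^0\eta(U(t,y+x(t))\midd U_L)\,dy$ satisfies $g'(t)\le 0$ a.e.\ and $g'(t)\le-\eps^3$ for $t\in\mI$. Integrating this differential inequality on $(0,T)$, using $g(0)=\int_{-\infty}^0\eta(U_0\midd U_L)\,dy\le\eps^4$ and $g(T)\ge0$, gives $0\le\eps^4-\eps^3|\mI|$, that is $|\mI|\le\eps$. The main obstacle is precisely the second branch of the dichotomy, where the drift speed is dictated by the right trace $U_+$ rather than by $U_-$: there the direct estimate only controls $\eta(U_+\midd U_L)$, and it is exactly the extremality encoded in (H2) — forcing $U_+\notin B$ and hence $\eta(U_+\midd U_L)$ bounded below by $2\eps_0^2$ — that recovers the rate $-\eps^3$.
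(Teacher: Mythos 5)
Your argument is correct and follows essentially the same route as the paper's proof: the entropy-dissipation inequality of Lemma \ref{Dafermos}, the dichotomy $x'(t)\le\max\{V(U_-),V(U_+)\}$ from Proposition \ref{x(t)}, the inequality part of Lemma \ref{decreasing} to transfer the boundary term to the right trace, and (H2) together with the definition of $\eps_0$ to force $\eta(U_+\midd U_L)\ge 2\eps_0^2$ and hence the dissipation rate $-\eps^3$. The only slip is harmless: in your monotonicity step you invoke $V(U_+)\le F(U_+,U_L)/\eta(U_+\midd U_L)-\eps$, which is undefined if $U_+=U_L$; but in that sub-case your preceding inequality already gives $R(t)\le x'(t)\,\eta(U_+\midd U_L)-F(U_+,U_L)=0$, and in fact your own (H2) argument (which applies for a.e.\ $t$ in this branch, not only $t\in\mI$) shows $U_+\notin B$, so that case never occurs.
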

\begin{proof}
From Lemma \ref{Dafermos}, we have for almost every $t>0$ %and the regulated trace property we have for almost every $t>0$
$$
\frac{d}{dt}\int_{-\infty}^0\eta(U(t,y+x(t)) \midd U_L)\,dy\leq-F(U(t,x(t)-),U_L)+x'(t)\eta(U(t,x(t)-) \midd U_L).
$$
At times $t$ for which $U(t,x(t)-)=U_L$, the right-hand side vanishes (and obviously $t\notin \mI$). For the other times $t$, Proposition \ref{x(t)} ensures that either
$$
 x'(t)\leq V(U(t,x(t)-))\leq\frac{F(U(t,x(t)-),U_L)}{\eta(U(t,x(t)-) \midd U_L)}-\eps, \qquad \ \ \mathrm{or} \ \ \qquad  x'(t)\leq V(U(t,x(t)+)).
$$
For the times corresponding to the first case, we have
$$
\frac{d}{dt}\int_{-\infty}^0\eta(U(t,y+x(t)) \midd U_L)\,dy\leq-\eps\eta(U(t,x(t)-) \midd U_L)\leq0.
$$
Especially, for every such  time in $\mI$, we have
\begin{equation}\label{sd}
\frac{d}{dt}\int_{-\infty}^0\eta(U(t,y+x(t)) \midd U_L)\,dy\leq-\eps^3.
\end{equation}
\\
For the second case, we appeal to Lemma 6. For almost every time we have either $U(t,x(t)-)=U(t,x(t)+)$ (which is covered by the first case) or  $x'(t)$ is the velocity associated to an entropic Rankine--Hugoniot discontinuity $(U(t,x(t)-),U(t,x(t)+))$. If $U(t,x(t)+)\in B\subset\mV$, we have
$$x'(t)\leq V(U(t,x(t)+)) \leq \lambda^-(U(t,x(t)+))-\eps,$$
which is in contradiction with Hypothesis (H2). Hence, for almost every time $t$ not covered by the first case, $U(t,x(t)+)\notin B$, and so $\eta(U(t,x(t)+) \midd U_L)\geq \eps_0^2$. In that case, we deduce from Lemmas \ref{decreasing} and \ref{Dafermos} that
\begin{align*}
\frac{d}{dt}\int_{-\infty}^0\eta(U(t,y+x(t)) \midd U_L)\,dy &\leq-F(U(t,x(t)-),U_L)+x'(t)\eta(U(t,x(t)-) \midd U_L) \\
& \leq -F(U(t,x(t)+),U_L)+x'(t)\eta(U(t,x(t)+) \midd U_L)\\
&\leq-\eps\eta(U(t,x(t)+) \midd U_L)\leq-\eps^3.
\end{align*}
In particular, for every $t\in\mI$, we have (\ref{sd}). Integrating in time, we find
$$
|\mI|\eps^3\leq \int_{-\infty}^0\eta(U_0(x) \midd U_L)\,dx\leq \eps^4,
$$
which gives the desired result.
\end{proof}
 
%Hence,  if $U(t,x(t)+)=U_L$, the result is verified. The last case to consider is if $x'(t)=\frac{F(U(t,x(t)+),U_L)}{\eta(U(t,x(t)+)|U_L)}-\eps$.
%The inequality is clear. Moreover if $\eta(U(t,x(t)+)|U_L)\geq\frac{\eps^2}{2}$ the whole result is true. 
%\vskip0.3cm
%We now consider the case, where:
%$$
 %\eta(U(t,x(t)-)|U_L)\leq\frac{\eps^2}{2}, \qquad  \eta(U(t,x(t)-)|U_L)\geq \eps^2.
%$$
%Thanks to Lemma, we know that we have a $S$-shock. 
%f $U(t,y+x(t)-)=U_L$ then the right hand side term is 0. Else, if $U(t,y+x(t)+)\neq U_L$, from Lemma \ref{planned} and Lemma \ref{Dafermos}
%we CANNOT have
%$$
%V(U(t,y+x(t)-)\leq x'(t)-\eps<V(U(t,y+x(t)+).
%$$
%Hence thanks to Lemma \ref{cornerstone}, 
%\begin{eqnarray*}
%&&-F(U(t,x(t)-),U_L)+x'(t)\eta(U(t,x(t)-)|U_L)\leq -F(U(t,x(t)+),U_L)+x'(t)\eta(U(t,x(t)+)|U_L)-\int_{-s_0}^0\sigma'(\tau)(\eta(U_-|S(\tau))-\eta(U_-|U_+))\,d\tau\\
%&&\qquad\qquad\leq -\eps^2...
%\end{eqnarray*}
%
%\end{proof}

Before we prove the second lemma, we mention the following basic fact.
\vskip0.3cm
\noindent {\bf Remark.} If $f : \mU \to \R$ is $C^1$ on $\mV$ and bounded on $\mU$, then for any fixed $V \in \mV$, there exists $C_V >0$ such that for all $U \in \mU$
\begin{align*}
| f(U) - f(V) | \leq C_V | U -V |.
\end{align*}
The proof of this fact follows the same outline as the proof of Lemma \ref{L2}.

\begin{lemm}\label{partie droite}
There exists a constant $C>0$ such that for every $0<\eps<\eps_0$
$$
F(U(t,x(t)+),U_R)-x'(t)\eta(U(t,x(t)+) \midd U_R)\leq C\eps
$$
for almost every $t\notin \mI$.
\end{lemm}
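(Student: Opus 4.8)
The plan is to bound, for almost every $t\notin\mI$, the quantity $\Phi(U_+,x'(t))$, where I abbreviate $\Phi(W,\sigma)=F(W,U_R)-\sigma\,\eta(W\midd U_R)$ and $U_\pm=U(t,x(t)\pm)$. First I would discard the null set of times where the traces fail or where the Rankine--Hugoniot relations of Lemma~\ref{Dafermos} fail, so that for the remaining $t$ the pair $(U_-,U_+)$ is an entropic Rankine--Hugoniot discontinuity with velocity $x'(t)$. For $t\notin\mI$ we have $\eta(U_-\midd U_L)<\eps^2<\eps_0^2$, so the choice of $\eps_0$ gives $U_-\in B$, and Lemma~\ref{L2} gives $|U_--U_L|\le C\eps$. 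The central structural input is Lemma~\ref{cornerstone} applied with $U=U_L$ and $s_0=s^*$, where $s^*$ is the parameter with $U_R=S_{U_L}(s^*)$: it yields $\Phi(S_{U_L}(s),\sigma_{U_L}(s))\le 0$ for every $s$ (the integrand there has a sign since $\sigma'_{U_L}\le0$ by (H1)(a) while $\tau\mapsto\eta(U_L\midd S_{U_L}(\tau))$ is monotone by (H1)(b)). The whole strategy is to reduce $\Phi(U_+,x'(t))$, in every case, to $\Phi(S_{U_-}(s),\sigma_{U_-}(s))$ up to an $\mathcal O(\eps)$ error, and then transport this quantity back to $U_L$ by continuity.

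Next I would split according to whether $U_-=U_+$ and to the sign of $x'(t)-\lambda^-(U_-)$. If $U_-\ne U_+$ and $x'(t)\ge\lambda^-(U_-)$, then Lemma~\ref{decreasing} gives $\Phi(U_+,x'(t))\le\Phi(U_-,x'(t))$, and since $\eta(U_-\midd U_R)\ge0$ the map $\sigma\mapsto\Phi(U_-,\sigma)$ is nonincreasing, so $\Phi(U_-,x'(t))\le\Phi(U_-,\lambda^-(U_-))=\Phi(S_{U_-}(0),\sigma_{U_-}(0))$. If $U_-\ne U_+$ and $x'(t)<\lambda^-(U_-)$, then Hypothesis (H3) (using $U_-\in B$) forces $U_+=S_{U_-}(s)$ for some $s>0$, and since $U_+\ne U_-$ the Rankine--Hugoniot relation pins the speed to $x'(t)=\sigma_{U_-}(s)$, whence $\Phi(U_+,x'(t))=\Phi(S_{U_-}(s),\sigma_{U_-}(s))$. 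Finally, if $U_-=U_+$ then $\Vi(t)=\Vs(t)=V(U_-)$, so $x'(t)=V(U_-)$ by Proposition~\ref{x(t)}; Lemma~\ref{lemmdisc} gives $V(U_-)\ge\lambda^-(U_L)-C\eps$, and since $\lambda^-(U_-)\le\lambda^-(U_L)+C\eps$ this yields $V(U_-)\ge\lambda^-(U_-)-C\eps$, so that $\Phi(U_+,x'(t))=\Phi(U_-,V(U_-))\le\Phi(U_-,\lambda^-(U_-))+C\eps=\Phi(S_{U_-}(0),\sigma_{U_-}(0))+C\eps$. In all three cases one obtains $\Phi(U_+,x'(t))\le\Phi(S_{U_-}(s),\sigma_{U_-}(s))+C\eps$ for some $s\in[0,s_{U_-}]$.

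It then remains to compare $\Phi(S_{U_-}(s),\sigma_{U_-}(s))$ with the nonpositive quantity $\Phi(S_{U_L}(s),\sigma_{U_L}(s))$, uniformly in $s$. The $C^1$ regularity of $(s,U)\mapsto S_U(s)$ and $(s,U)\mapsto\sigma_U(s)$ on $\{U\in B,\ 0\le s\le s_U\}$ from (H1), together with the Lipschitz dependence of $s_U$ on $U$, gives $|S_{U_-}(s)-S_{U_L}(s)|+|\sigma_{U_-}(s)-\sigma_{U_L}(s)|\le C|U_--U_L|\le C\eps$ over the common range of $s$. Since $\eta$, $A$, $G$ are $C^1$ on $\mU\setminus\mU^0$ and the shock endpoints $S_{U_L}(s)$ avoid $\mU^0$ by the standing no-vacuum-shock assumption, $\Phi$ is Lipschitz on a compact neighborhood of the arc $\{(S_{U_L}(s),\sigma_{U_L}(s))\}$; composing yields $\Phi(S_{U_-}(s),\sigma_{U_-}(s))\le\Phi(S_{U_L}(s),\sigma_{U_L}(s))+C\eps\le C\eps$ uniformly in $s$. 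Combined with the previous step this gives $\Phi(U_+,x'(t))\le C\eps$, as claimed.

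The hard part will be precisely this last uniform-in-$s$ Lipschitz estimate. One must keep the whole perturbed family $S_{U_-}(s)$ inside the region $\mU\setminus\mU^0$ where $\eta,A,G$ are differentiable, so that the merely continuous behaviour at the vacuum set $\mU^0$ never enters; the no-vacuum-shock hypothesis together with a compactness argument is what secures this. Obtaining a genuine $\mathcal O(\eps)$ rate (rather than merely $o(1)$) is what forces the use of the Lipschitz bounds from (H1) and from the Remark preceding the lemma, and one also has to absorb the small mismatch between $s_{U_-}$ and $s_{U_L}$ through the Lipschitz dependence of $s_U$.
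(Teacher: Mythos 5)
Your proposal is correct and follows essentially the same route as the paper's proof: the same three-case split (continuity points, discontinuities with $x'(t)\geq\lambda^-(U_-)$ handled by Lemma~\ref{decreasing}, and 1-shocks handled by (H3)), the same key inputs (Lemmas~\ref{Dafermos}, \ref{L2}, \ref{lemmdisc}, and \ref{cornerstone} applied with $U_R=S_{U_L}(s^*)$), and the same treatment of the two technical points (positive distance of the shock set from $\mU^0$ via the no-vacuum-shock assumption, and the mismatch between $s_{U_-}$ and $s_{U_L}$ via the Lipschitz dependence of $s_U$). The only difference is organizational: you funnel all three cases through one uniform-in-$s$ comparison $\Phi(S_{U_-}(s),\sigma_{U_-}(s))\leq\Phi(S_{U_L}(s'),\sigma_{U_L}(s'))+C\eps\leq C\eps$, whereas the paper runs cases 1 and 2 directly at $U_L$ and reserves the uniform estimate for the 1-shock case.
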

\begin{proof}
First note that $t\notin\mI$ implies $U(t,x(t)-) \in B'= \{U\in B \ | \ \eta(U \midd U_L) \leq \eps^2_0 \} \subset B$. Also, from Lemma \ref{L2}, we have
%$U(t,x(t)-) \in B$. Also, from Lemma \ref{L2}, we have
$$
|U(t,x(t)-)-U_L|\leq C\eps.
$$
We consider three cases.
\vskip0.3cm
\begin{itemize}
\item[1.] (Points of continuity.)
For almost every $t\notin\mI$ such that 
$$
U(t,x(t)+)=U(t,x(t)-),
$$
we have, from  Proposition  \ref{x(t)} and Lemma \ref{lemmdisc},
$$
x'(t)\geq V(U(t,x(t)+)\geq\lambda^-(U_L)-C\eps.
$$
Hence, changing the constant $C$ from line to line if necessary,
\begin{eqnarray*}
&&F(U(t,x(t)+),U_R)-x'(t)\eta(U(t,x(t)+) \midd U_R)\\
&&\qquad\qquad \leq F(U(t,x(t)-),U_R)-\lambda^-(U_L)\eta(U(t,x(t)-) \midd U_R)+C\eps\\
&&\qquad\qquad \leq F(U_L,U_R)-\lambda^-(U_L)\eta(U_L \midd U_R)+C\eps\\
&&\qquad\qquad \leq F(S_{U_L}(0),S_{U_L}(s_0))-\sigma_{U_L}(0)\eta(S_{U_L}(0) \midd S_{U_L}(s_0))+C\eps\leq C\eps,
\end{eqnarray*}
where, in the last inequality, we used Lemma \ref{cornerstone} with $S_{U_L}(s_0) = U_R$. 
\item[2.] (Discontinuities excluding 1-shocks.)
If $(U(t,x(t)-),U(t,x(t)+))$ is an entropic Rankine--Hugoniot discontinuity with velocity $x'(t) \geq \lambda^-(U(t,x(t)-))$, then
\begin{eqnarray*}
&&F(U(t,x(t)+),U_R)-x'(t)\eta(U(t,x(t)+) \midd U_R)\\
&&\qquad\qquad \leq F(U(t,x(t)-),U_R)-x'(t)\eta(U(t,x(t)-) \midd U_R)\\
&&\qquad\qquad \leq F(U(t,x(t)-),U_R)-\lambda^-(U_-)\eta(U(t,x(t)-) \midd U_R)\\
&&\qquad\qquad \leq F(U_L,U_R)-\lambda^-(U_L)\eta(U_L \midd U_R)+C\eps\leq C\eps.
\end{eqnarray*}
In the second line we used Lemma \ref{decreasing}, and in the fourth line Lemma \ref{cornerstone}.
\item[3.] (1-shocks.)
On the other hand, if $(U(t,x(t)-),U(t,x(t)+))$ is an entropic discontinuity with velocity $x'(t) < \lambda^-(U(t,x(t)-))$, then, by Hypothesis (H3), it is a 1-shock and, since $U\to s_U$ is Lipschitz on $B$, there exists $s'\in [0,s_{U_L}]$ such that   we have both $U(t,x(t)+)=S_{U(t,x(t)-)}(s)=S_{U_L}(s')+ \mathcal{O}(\eps)$ and $x'(t)=\sigma_{U(t,x(t)-)}(s)=\sigma_{U_L}(s')+ \mathcal{O}(\eps)$.
%Note that  $B'=\overline{\{U\in B | \eta(U|U_L)\leq \eps^2_0\}}$ is closed and included in $B$. 
As noted above, $U(t,x(t)-) \in B' \subsetneq B$.
Moreover $U\to s_U $ is continuous on $B'$, which is closed, and there is no state $U_+\in \mU^0$ such that $(U,U_+)$ is 
an admissible discontinuity. Hence the set  $\{S_{U}(s) \ | \ U\in B', s\in [0,s_U] \}$ is at a positive distance from $\mU^0$ (where $\eta$ and $A$ may be not $C^1$). The functions $F(\cdot, U_R)$ and $\eta(\cdot \midd U_R)$ are then uniformly Lipschitz on this set.
Hence, by Lemma \ref{cornerstone}, 
\begin{eqnarray*}
&&  F(U(t,x(t)+),U_R)-x'(t)\eta(U(t,x(t)+) \midd U_R)\\
&&\qquad\qquad \leq F(S_{U_L}(s'),U_R)-\sigma_{U_L}(s')\eta(S_{U_L}(s') \midd U_R)+ \mathcal{O}(\eps) \leq  \mathcal{O}(\eps).
\end{eqnarray*}
\end{itemize}
\end{proof}

We can now finish the proof of the theorem.
Lemma \ref{partie gauche} implies that for all $0<t<T$
\begin{equation}\label{1}
\int_{-\infty}^0\eta(U(t,y+x(t)) \midd U_L)\,dy\leq \int_{-\infty}^0\eta(U_0(y) \midd U_L)\,dy \leq \eps^4.
\end{equation}
On the other hand,
\begin{multline*}
\int_{0}^\infty\eta(U(t,y+x(t)) \midd U_R)\,dy\\
=\int_{0}^\infty\eta(U_0(y) \midd U_R)\,dy+\int_{(0,t)\cap\mI}\frac{d}{dr}\left\{\int_0^\infty\eta(U(r,y+x(r)) \midd U_R)\,dy\right\}\,dr\\
+\int_{(0,t)\cap\mI^c}\frac{d}{dr}\left\{\int_0^\infty\eta(U(r,y+x(r)) \midd U_R)\,dy\right\}\,dr.
\end{multline*}
Since $F$, $x'$, and $\eta$ are bounded and $|\mI|\leq \eps$, we have
\begin{multline*}
\left|\int_{\mI}\frac{d}{dr}\left\{\int_0^\infty\eta(U(r,y+x(r)) \midd U_R)\,dy\right\}\,dr\right|\\
=\int_{\mI}\left|F(U(r,x(r)+),U_R)-x'(r)\eta(U(r,x(r)+) \midd U_R)\right|\,dr\leq C|\mI|\leq C\eps.
\end{multline*}
Applying this estimate together with Lemmas \ref{L2} and \ref{partie droite} above, we obtain
\begin{equation}\label{2}
\int_{0}^\infty\eta(U(t,y+x(t)) \midd U_R)\,dy\leq C\eps(1+t).
\end{equation}
It remains to show that $|x(t)-\sigma t|\leq C\sqrt{\eps t(1+t)}.$ We denote by $\sU$ the function defined by 
\begin{align*}
\sU(x) = 
\begin{cases}
U_L, &\text{if $x<0$,}\\[0.1 cm]
U_R, &\text{if $x>0$.}\\
\end{cases}
\end{align*}
Note that the exact shock solution is $\sU(x-\sigma t)$.
By virtue of (\ref{1}) and (\ref{2}), we have for every $0<t<T$
$$
\|U(t,\cdot)-\sU(\cdot - x(t))\|_{L^2}\leq C\sqrt{(1+t)\eps}.
$$
for some $C > \sqrt{\frac{\eps_0^{3}}{C_1}}$, where $C_1$ is given by Lemma \ref{L2}. 
Let $M = \| x'(t) \|_{L^\infty}$. Then for $T>0$, we consider an even cutoff function $\phi\in C^\infty(\R)$ such that 
\begin{align*}
\begin{cases}
\phi(x)=1, &\text{if $|x|\leq MT$,}\\[0.1 cm]
\phi(x)=0, &\text{if $|x|\geq 2MT$,}\\[0.1 cm]
\phi '(x)\leq 0, &\text{if $x\geq0$,}\\[0.1 cm]
|\phi'(x)|\leq 2(MT)^{-1}, &\text{if $x\in \R$.}
\end{cases}
\end{align*}
Then, for almost every $0<t< T$, we have
\begin{align*}
0&=\int_0^t\int_{\R}\phi(x)\left[\dt U+\dx A(U)\right]\,dx\,dt\\
&=\int_{\R}\phi(x)\left[\sU(x-x(t))-\sU(x)\right]\,dx-\int_0^t\int_{\R}A(\sU(x-x(s)))\phi'(x)\,dx\,ds\\
& \qquad \qquad +\int_\R\phi(x)\left[U(t,x)-\sU(x-x(t))\right]\,dx+\int_\R\phi(x)\left[\sU(x)-U^0(x)\right]\,dx\\
& \qquad \qquad \qquad \qquad -\int_0^t\int_{\R}\left[A(U(s,x))-A(\sU(x-x(s)))\right]\phi'(x)\,dx\,ds.
\end{align*}
The terms on the second line above to reduce to
$$
x(t)(U_L-U_R)-t(A(U_L)-A(U_R))=(x(t)-\sigma t) (U_L-U_R).
$$ 
The third line can be controlled by
$$
\|\phi\|_{L^2(\R)}\left(\|U(t,\cdot)-\sU(\cdot-x(t))\|_{L^2(\R)}+\|U^0-\sU\|_{L^2(\R)}\right)\leq C\sqrt{4MT}\sqrt{(1+T)\eps}.
$$
Finally, since $A$ has a suitable Lipschitz property at the points $U_L, U_R \in \mV$ (see the remark preceding Lemma \ref{partie droite}), the last term has the following bound:
\begin{multline*}
\left|\int_0^t\int_\R \left[A(U(s,x))-A(\sU(x-x(s)))\right]\phi'(x)\,dx\,ds\right|\\
\qquad \ \ \leq C \|\phi'\|_{L^\infty(\R)}\int_0^t\int_{-2MT}^{2MT}\left|U(s,x)-\sU(x-x(s))\right|\,dx\,ds\\
\leq \frac{2C}{MT} \int_0^t \sqrt{4MT} \thinspace \| U(s,\cdot)-\sU(\cdot-x(s)) \|_{L^2(\R)} \,dx\,ds \leq C\sqrt{T(1+T)\eps}.
\end{multline*}
Combining the estimates above we obtain
$$
|x(t)-\sigma t|\leq \frac{C\sqrt{T (1+T)\eps}}{|U_L-U_R|}\leq C\sqrt{T(1+T)\eps}. 
$$
This concludes the proof of the theorem.

%For the last one we have to be careful since $A$ may be not Lipschitzian on $\mU$ (as in the case with vacuum). %Still, there is a compact subset $\Omega$ of $\mV$ such that $U_L,U_R\in \mathring{\Omega}$. We denote %$\delta$ the distance from $\Omega^c$ to $\{U_L,U_R\}$. For every $s>0$, we denote
%$$
%K(s)=\{x \ | \ U(s,x)\in \Omega\}.
%$$
%$A$ is Lipschitzian on $\Omega$, so
%\begin{multline*}
%\left|\int_0^t\int_{K(s)}\left[A(U(s,x))-A(\sU(x-x(s)))\right]\phi'(x)\,dx\,ds\right|\\
%\leq \|\nabla A\|_{L^\infty(\Omega)}\|\phi'\|_{L^\infty}\int_0^t\int_{-2journal={},MT}^{2MT}\left|U(s,x)-\sU(x-x(s))\right|\,dx\,ds \leq %C\sqrt{T(1+T)\eps}
%\end{multline*}
%But for almost every  time $s$, thanks to the Chebyshev inequality
%\begin{eqnarray*}
%&&|\R\setminus K(s)|=|\{x \ | \ U(s,x)\in \Omega^c\}|\leq |\{x \ \big | \ |U(s,x)-\sU(s,x-x(s))|^2\geq \delta^2\}|\\
%&&\qquad\qquad\leq \frac{C(1+T)\eps}{\delta^2}
%\leq C(1+T)\eps,
%\end{eqnarray*}
%and since $A$ is bounded on $\mU$
%$$
%\left\| \mathrm{1}_{\{x\in K(s)^c\}}\left[A(U(s,\cdot))-A(\sU(\cdot-x(s)))\right]\right\|_{L^2(\R)}\leq C\sqrt{(1+T)\eps}.
%$$
%So, we have
%$$
%\left|\int_0^t\int_{\R}\left[A(U(s,x))-A(\sU(x-x(s)))\right]\phi'(x)\,dx\,ds\right|\leq C\sqrt{T(1+T) \eps}.
%$$
%And finally:
%$$
%|x(t)-\sigma t|\leq \frac{C\sqrt{T (1+T)\eps}}{|U_L-U_R|}\leq C\sqrt{T(1+T)\eps}. 
%$$
%This concludes the proof of the theorem.

\section{Proof of the theorems stated in the introduction}

We can now prove Theorems \ref{theoEuler} and \ref{theoHyper} which were stated the introduction. In each case we want to show that the hypotheses of Theorem \ref{main} are fulfilled.

\subsection{Isentropic Euler system}

To illustrate the foregoing ideas, we apply our results to the isentropic Euler system (\ref{isentropic}). We will show that our hypotheses are satisfied for a fairly large class of pressure laws $P(\rho)$.

First, let us take
\begin{align*}
\mV = \left\{ (\rho, \rho u) \in \R^+ \times \R \ \big \vert \ 0 < \|(\rho,  u)\|_{L^{\infty}} < K \right\}.
\end{align*}
The petal-shaped region $\mV$ %(in the $(\rho, \rho u)$ half-plane)
is open, bounded, and convex. In particular, $u$ is bounded in $\mV$, which ensures the continuity of $\eta$, $A$, and $G$ at the vacuum state $(0,0) \in \mU^0$. (Here, the set $\mU^0$ contains a single point.) One can check that differentiability fails at this point.
  
The eigenvalues for this system are
\begin{align}\label{ev1}
\lambda_1(\rho,\rho u) = u - \sqrt{P^{\prime}(\rho)}, \qquad \lambda_2(\rho,\rho u) = u + \sqrt{P^{\prime}(\rho)}.
\end{align}
To ensure hyperbolicity we assume $P^{\prime}(\rho) >0$. If $\left\{\sigma, (\rho_L, u_L), (\rho_R, u_R) \right\}$ 
satisfies the Rankine-Hugoniot condition, then the following relations hold
\begin{align}
u_L \pm \sqrt{\frac{\rho_R}{\rho_L} \left[ \frac{P_R - P_L}{\rho_R - \rho_L} \right]} \thinspace =  \thinspace \sigma \thinspace = \thinspace u_R \pm \sqrt{\frac{\rho_L}{\rho_R} \left[\frac{P_R - P_L}{\rho_R - \rho_L}\right]}, \label{rh1} \\[0.2 cm]
(u_L - u_R)^2 = \frac{(P_R - P_L)(\rho_R -\rho_L)}{\rho_R \rho_L}. \ \ \ \thinspace \ \qquad \label{rh2}
\end{align}
In view of (\ref{ev1}), the plus and minus signs in (\ref{rh1}) distinguish the 2-shocks ($\rho_R < \rho_L$) and 1-shocks ($\rho_R > \rho_L$), respectively. It will not be necessary to distinguish between shocks and contact discontinuities here, so we will call all such discontinuities shocks. Let us assume that the entropy admissible shocks are characterized by the relation $u_L > u_R$. The opposite case is treated similarly.

With respect to the hypotheses (H1) and ($\text{H}^{\prime}$1), we take
\begin{align*}
S_{(\rho, \rho u)}(s) &= \left( \rho + s, \ \rho u \pm \rho \sqrt{\frac{(P(\rho + s) - P(\rho))s}{\rho(\rho + s)}} \thinspace \right),\\[0.2 cm]
\sigma_{(\rho, \rho u)}(s)  &= u \pm  \sqrt{\frac{\rho+s}{\rho} \left[ \frac{P(\rho + s) - P(\rho)}{s}\right]},
\end{align*}
where $s>0$ represents the change in density, in absolute value, across the shock. The plus and minus signs correspond to the cases ($\text{H}^{\prime}$1) and (H1), respectively. Note that, in both cases, the functions $(s,U)\to S_U(s)$ and $(s,U)\to\sigma_U(s)$ are $C^1$ on 
$\R^+ \times \mV$. Also, it is straightforward to verify that the function $U \to s_U$, where
\begin{align*}
s_U = \sup \left\{ s_0 \in \R^+ \ \big \vert \ S_U(s) \in \mV \ \text{for} \ \text{all} \ s\in [0,s_0] \right\}, 
\end{align*}
is Lipschitz on $B \subset \mV$, provided the neighborhood $B$ (corresponding to a fixed left or right endstate) is small enough. In particular, the terminal value $s_U$ is finite and does not correspond to the vacuum state.

Using only the hyperbolicity assumption on $P$, we can verify Hypotheses (H1b) and ($\text{H}^{\prime}$1b). Indeed, the relative entropy in this case is given by
\begin{align*}
\eta((\rho, \rho u) \midd (\rho_0, \rho_0 u_0)) = \ds \frac{\rho}{2} (u - u_0)^2 + S(\rho \midd \rho_0).
\end{align*}
Therefore,
\begin{align*}
\frac{d}{ds} \eta((\rho, \rho u) \midd S_{(\rho, \rho u)}(s)) &= \frac{d}{ds} \left\{ \frac{\rho}{2} \left[ \frac{(P(\rho + s) - P(\rho))s}{\rho(\rho + s)} \right] + S(\rho \midd \rho+s) \right\}\\[0.1 cm]
&= \frac{1}{2(\rho+s)^2} \left[ s(\rho+s)P^{\prime}(\rho + s) + \rho (P(\rho + s) - P(\rho))\right] + sS^{\prime \prime}(\rho + s) > 0.
%&= P^{\prime}(\rho + s) \cdot \left[ \frac{1}{\rho} - \frac{1}{\rho + s} \right] - \left[ P(\rho +s) - P(\rho) \right] \cdot \frac{1}{(\rho + s)^2} + %sS^{\prime \prime}(\rho + s)
\end{align*}
% This is an important point in the third step of Lemma \ref{partie droite}. 
% and the terminal value $s_U$ is finite and does not correspond to the vacuum state (whereas a parameterization of the 2-shock curve %starting from the left endstate would terminate at the vacuum state). This is an important point in the third step of Lemma \ref{partie droite}. 
%Now let us investigate the natural condition on $P(\rho)$ which ensures that the Liu condition holds globally along each shock curve. %Thanks to (\ref{rh1}), we can parametrize the shock speeds as follows

Finally, observe that the global Liu-admissibility hypotheses, (H1a) and ($\text{H}^{\prime}$1a), are satisfied if and only if the function
\begin{align*}
\phi(s) = \frac{\rho+s}{\rho} \left[ \frac{P(\rho+s) - P(\rho)}{s} \right] 
\end{align*}
is nondecreasing. The derivative of $\phi$ can be expressed as follows.
\begin{align*}
\phi^{\prime}(s) = 
\begin{cases}
\ds \frac{1}{\rho s^2} \ds \int_{\rho}^{\rho+s} (q-\rho)  [qP(q)]^{\prime \prime} \,dq \quad &\text{if $s \ne 0$;}\\[0.6 cm]
\ds \frac{1}{2 \rho}  [\rho P(\rho)]^{\prime \prime} &\text{if $s = 0$.}\\
\end{cases}
\end{align*}
Therefore, the Liu condition holds if and only if
\begin{align*}
[\rho P(\rho)]^{\prime \prime} \geq 0.
\end{align*}
This may include, for instance, systems which fail to be genuinely nonlinear. 
On the other hand, it is interesting to note that the genuine nonlinearity condition, $[\rho P(\rho)]^{\prime \prime} \ne 0$, holds if and only if $\sigma^{\prime} \ne 0$; that is, if and only if the shock speed is strictly monotone along each shock curve. In fact, the equivalence of genuine nonlinearity and the global {\it strict} Liu-condition can be shown for a wide class of conservation laws, as suggested by Liu \cite{Liu2}. 
Finally, it follows from (\ref{rh1}) and the analysis above that the Lax admissibility conditions also hold globally when $[\rho P(\rho)]^{\prime \prime} \geq 0$. This verifies (H2)-(H3) and the corresponding dual conditions. Therefore, Theorem \ref{main} applies and the proof is complete.\\

\subsection{Full Euler system}
Now let us show that our theorem applies to the full Euler system for a polytropic gas. The idea is generally the same; however, our presentation is less explicit due to the complexity of the system. First, we introduce the state space
\begin{align*}
\mV = \left\{ (\rho, \rho u, \rho E) \in \R^+ \times \R \times \R^+ \ \big \vert \ 0 < \|(\rho,  u,  E)\|_{L^{\infty}} < K \right\}.
\end{align*}
As before, the region $\mV$ is open, bounded, and convex; and the functions $\eta$, $A$, and $G$ are continuous at the vacuum state $(0,0) \in \mU^0$. 

To simplify the remainder of the presentation, we work with the nonconservative variables $(\rho, u, e)$. The sound speed in the case of a polytropic pressure law (\ref{polytropic}) is given by 
\begin{align*}
c = \sqrt{\partial_\rho P + \rho^{-2}P \partial_e P } = \sqrt{\gamma(\gamma -1) e}.
\end{align*}
Hence the eigenvalues of the system are
\begin{align}\label{ev2}
\lambda_1(\rho, u, e) = u - \sqrt{\gamma(\gamma -1) e}, \qquad \lambda_2(\rho, u, e) = u, \qquad \lambda_3(\rho, u, e) = u + \sqrt{\gamma(\gamma -1) e}.
\end{align}
We consider a discontinuity $\left\{\sigma, (\rho_L, u_L, e_L), (\rho_R, u_R, e_R) \right\}$ verifying the Rankine-Hugoniot condition. Excluding contact discontinuities $(u_L = u_R)$, the following relations hold.
\begin{align}
u_L \pm \sqrt{\ds \frac{\gamma P_L}{\rho_L}} \cdot \sqrt{\ds \ds\frac{\gamma-1}{2 \gamma} + \frac{\gamma+1}{2 \gamma} \left[ \frac{P_R}{P_L} \right]} \thinspace =  \thinspace \sigma \thinspace = \thinspace u_R \pm \sqrt{\ds \frac{\gamma P_R}{\rho_R}} \cdot \sqrt{\ds \ds\frac{\gamma-1}{2 \gamma} + \frac{\gamma+1}{2 \gamma} \left[ \frac{P_L}{P_R} \right]}, \label{rh3} \\[0.3 cm]
\ds\frac{P_R}{P_L} = \ds \frac{ \left[\ds \frac{\gamma+1}{\gamma-1} \right] \ds \frac{\rho_R}{\rho_L} -1 }{ \left[ \ds \frac{\gamma+1}{\gamma-1} \right]  - \ds \frac{\rho_R}{\rho_L}}, \qquad \ \ \ \
\ds\frac{e_R}{e_L} = \ds\frac{P_R}{P_L} \cdot \ds\frac{\rho_L}{\rho_R},  \qquad \ \ \ \ \ds \frac{\gamma -1}{\gamma+1} < \ds\frac{\rho_R}{\rho_L} < \ds \frac{\gamma+1}{\gamma-1}. \qquad \label{rh4}
\end{align}
We refer the reader to \cite{Toro} for a proof of these relations. Note that the plus and minus signs in (\ref{rh3}) distinguish the 3-shocks and 1-shocks, respectively. Let us fix a left endstate $(\rho_L, u_L, e_L) \in (\R^+ \times \R \times \R^+)$. The entropy admissible shocks can be parametrized by any of the three quantities $P_R$, $\rho_R$, and $e_R$, which are related by (\ref{rh4}). The following conditions are equivalent and characterize the family of 1-shocks.
\begin{align*}
\qquad \qquad \qquad \qquad P_R > P_L, \qquad \rho_R > \rho_L, \qquad e_R > e_L. \qquad  \qquad  \text{(1-shock)}
\end{align*}
Similarly, the 3-shocks are characterized by 
\begin{align*}
\qquad \qquad \qquad \qquad P_R < P_L, \qquad \rho_R < \rho_L, \qquad e_R < e_L. \qquad \qquad \text{(3-shock)}
\end{align*}
Note carefully that, for $\rho_L$ fixed, $\rho_R$ is bounded above and bounded away from zero, whereas $P_R$ and $e_R$ range over all positive real numbers. Furthermore, each of the quantities is an increasing function of the others as indicated above. While we omit explicit formulas for $S_{(\rho, \rho u, \rho E)}(s)$ and $\sigma_{(\rho, \rho u, \rho E)}(s)$, the regularity properties of (H1) and ($\text{H}^{\prime}$1) are easily satisfied.

Note that (\ref{ev2})-(\ref{rh4}) easily imply that both shock families verify the Lax and Liu admissibility conditions globally. Therefore, it remains only to check that the relative entropy increases along the shock curves. We will prove that (H1b) holds for the family of 1-shocks. The proof of ($\text{H}^{\prime}$1b) for the family of 3-shocks is essentially the same. A somewhat tedious calculation shows that
\begin{align*}
\eta((\rho_L, \rho_L u_L, \rho_L E_L) \midd (\rho_R, \rho_R u_R, \rho_R E_R)) =
(\gamma-1) h(\rho_L \midd \rho_R) - \rho_L \text{ln}(e_L \midd e_R) + \ds \frac{\rho_L}{2e_R} (u_L - u_R)^2,
\end{align*}
where $h(x) = x \thinspace \text{ln} \thinspace x$. Observe first that $h(x)$ and $-\text{ln} \thinspace x$ are both strictly convex functions. Since  $\vert \rho_R - \rho_L \vert$ and $\vert e_R - e_L \vert$ are increasing along the shock curve, the first two terms in the formula above are also increasing. To deal with the last term, we recall (\ref{rh2}) which also holds for the full Euler system and which we rewrite as follows.
\begin{align*}
(u_L - u_R)^2 = \ds \frac{P_R}{\rho_R} \left[1 - \ds \frac{P_L}{P_R} \right] \left[ \ds \frac{\rho_R}{\rho_L} - 1\right].
\end{align*}
Hence we have
\begin{align*}
\frac{\rho_L}{2e_R} (u_L - u_R)^2 = \frac{(\gamma-1) \rho_L \rho_R}{2P_R}(u_L - u_R)^2 = \ds \frac{1}{2} (\gamma-1) \rho_L \left[1 - \ds \frac{P_L}{P_R} \right] \left[ \ds \frac{\rho_R}{\rho_L} - 1\right].
\end{align*}
Finally, we note that the bracketed terms are both positive and increasing along the 1-shock curves, hence the product is increasing. This establishes hypothesis (H1b), and the theorem follows.

\subsection{Strictly Hyperbolic system with small amplitude}
The proof of Theorem \ref{theoHyper} follows the same outline as the proof of Theorem \ref{main}. We simply need to check that our structural hypotheses are verified locally. Again, we restrict our attention to 1-shocks and 1-contact discontinuities. The argument for n-shocks and n-contact discontinuites is similar. 

By assumption, the state space $\Omega$ is open and $\lambda^-(V)$ is a simple eigenvalue for all $V \in \Omega$. Therefore, in a neighborhood of each state $V \in \Omega$, there exists a 1-shock curve $S_{V}(s)$, of class $C^1$, verifying 
\begin{align}\label{gn}
\sigma^{\prime}_{V}(0) = \frac{1}{2} \frac{d}{ds} \bigg \vert_{s=0} \lambda_1(S_{V}(s)).
\end{align}
If the 1-characteristic family is genuinely nonlinear, the parameterization of $S_V(s)$ is chosen so that the right-hand side of ({\ref{gn}}) is negative. For linearly degenerate fields, the right-hand side vanishes and $\sigma^{\prime}_V(s)$ is identically zero for all $s$ along the shock curve $S_V(s)$. In either case, a straightforward continuity argument shows that for each $V_0 \in \Omega$ there exists $K>0$ (sufficiently small) so that for all $V \in B_K(V_0)$, the 1-shock curve $S_V(s)$ exists in a neighborhood of $V$ which contains $B_K(V_0)$ and such that $\sigma^{\prime}_V(s) \leq 0$ for $S_V(s) \in B_K(V_0)$. This means that the Liu condition (H1a) is valid with respect to any fixed $U_L \in B_K(V_0)$. In terms of the notation of Section \ref{hypos}, we have taken $\mV = B = B_K(V_0)$.

Now, with respect to property (H1b), an easy calculation shows that 
\begin{align*}
\ds \frac{d}{ds} \thinspace \eta(U \midd S_U(s)) = S^{\prime}_U(s)^{T} \cdot D^2 \eta (S_U(s)) \cdot [S_U(s)-U].
\end{align*}
Since $D^2 \eta$ is positive definite, the previous quantity is nonnegative for $s \geq 0$ sufficiently small (since $S_U(s)-U$ and $S^{\prime}_U(s)^{T}$ point in nearly the same direction). Hence, (H1b) holds for all $U_L \in B_K(V_0)$, provided $K$ is small enough.

For genuinely nonlinear fields, it is well-known that the Lax and Liu entropy conditions are equivalent for weak shocks. Therefore, it suffices to check conditions (H2) and (H3) among all entropic Rankine-Hugoniot discontinuities $(U_-, U_+)$ excluding the family of 1-discontinuities. Keep in mind that, in each case, both $U_-$ and $U_+$ are contained in the closure of $B_K(V_0)$.
Since all discontinuities under consideration are weak, we have 
\begin{align*}
\sigma(U_-, U_+) = \lambda(U_{\pm}) + \mathcal{O}(\vert U_+ - U_- \vert),
\end{align*}
where $\lambda(U_{\pm})$ is some intermediate eigenvalue of $\nabla A (U_{\pm})$. Since $\nabla A$ is continuous and $\lambda_1 = \lambda^-$ is a simple eigenvalue, there exists $\delta > 0$ such that $\lambda^-(V) < \lambda(V) - \delta$ for all $V \in B_K(V_0)$. Then, for $K$ small enough, it follows that $\sigma(U_-, U_+) \geq \lambda^-(U_{\pm})$ for all (intermediate) Rankine-Hugoniot discontinuities with endstates $U_-, U_+ \in \overline{B_K(V_0)}$. This completes the proof. 

\appendix
\section{Proof of Lemmas \ref{L2} and \ref{Dafermos}}
We first give  the proof of Lemma \ref{L2}.
\begin{proof}
On $\mV^2$, we have
$$
\eta(U|V)=\int_0^1\int_0^1 (U-V)^T \cdot D^2 \eta (V+st(U-V)) \cdot (U-V) \thinspace t\,ds\,dt.
$$
Consider a convex, compact set $\tilde{\Omega}$ such that $\Omega \subset \tilde{\Omega}\subset \mV$ and $\overline{\tilde{\Omega}^c} \cap \Omega = \emptyset$, where $\tilde{\Omega}^c = \mV \setminus \tilde{\Omega}$.
Also, let $0<\Lambda^-_{\tilde{\Omega}} \leq \Lambda^+_{\tilde{\Omega}}<\infty$ denote, respectively, the smallest and largest eigenvalues of $D^2 \eta$ on $\tilde{\Omega}$. Then, for any $U\in \tilde{\Omega}$ and $V\in \Omega$, we have
 $$
\frac{\Lambda^-_{\tilde{\Omega}}}{2} \thinspace |U-V|^2\leq\eta(U \midd V)\leq \frac{\Lambda^+_{\tilde{\Omega}}}{2} \thinspace |U-V|^2.
 $$
 On the other hand, there exists a constant $C>0$ such that
 $$
\inf_{U\in \tilde{\Omega}^c, V\in \Omega}\eta(U \midd V)\geq\frac{1}{C}, \qquad \sup_{U\in \tilde{\Omega}^c, V\in \Omega}\eta(U \midd V)\leq C.
 $$
Indeed, $\eta(\cdot \midd \cdot)$ is continuous on the compact set $\overline{\tilde{\Omega}^c}\times \Omega$. So it attains its maximum and minimum. The maximum is bounded. The mininum is attained at some point $(U_0,V_0)$ with $U_0\neq V_0$, and so $\eta(U_0 \midd V_0)\neq0$. 
Hence the results holds true with
\begin{align*}
C_1=\min\left\{\frac{\Lambda^-_{\tilde{\Omega}}}{2}, \ \frac{1}{C\ds\sup_{U\in\tilde{\Omega}^c,V\in\Omega}|U-V|^2}\right\}, \qquad 
C_2=\max\left\{\frac{\Lambda^+_{\tilde{\Omega}}}{2}, \ \frac{C}{\ds\inf_{U\in\tilde{\Omega}^c,V\in\Omega}|U-V|^2}\right\}.
\end{align*}
\end{proof}
\vskip0.5cm
We now give the proof of Lemma \ref{Dafermos}.
\begin{proof}
Since $A$, $\eta$, and $G$ are continuous on $\mU$, the functions of $(t,x)$ $A(U)$, $\eta(U \midd U_L)$, $\eta(U \midd U_R)$,
$F(U,U_L)$, and $F(U,U_R)$ also verify the strong trace property of Definition \ref{defi_trace}. 
\vskip0.5cm
\noindent 
Consider the family of mollifyer functions
$$
\phi_\eps(y)=\frac{1}{\eps}\phi_1(\frac{y}{\eps}),
$$ 
where $\phi_1\in C^\infty(\R)$ is nonnegative, compactly supported in $(-1,0)$, and such that
 $\int \phi_1(y)\,dy=1$.
We define 
$$
\Phi_\eps(x)=\int_x^\infty\phi_\eps(y)\,dy.
$$
Note that $\Phi_\eps$ is bounded by 1. It is equal to 1 for $x<-\eps$ and equal to 0 for $x>0$.
Especially, it converges to $\mathrm{1}_{\{x<0\}}$ when $\eps$ goes to 0.
\vskip0.5cm
\noindent We consider also a test function in time only $\psi\in C^\infty(\R^+)$, $\psi\geq0$, compactly supported
in $(0,T)\subset (0,\infty)$.
\vskip0.5cm
\noindent
For any function $V$ verifying the strong trace property of Definition \ref{defi_trace},
we have
\begin{eqnarray*}
&&\qquad\qquad\int_0^\infty\int_\R \psi(t)\phi_\eps(y-x(t))V(t,y)\,dy\,dt\\
&&=\int_0^\infty\psi(t)V(t,x(t)-)\,dt +\int_0^\infty\int_\R \psi(t)\phi_\eps(y-x(t))[V(t,y)-V(t,x(t)-)]\,dy\,dt.
\end{eqnarray*}
The last term is smaller than
\begin{eqnarray*}
&&\qquad \|\psi\|_{L^\infty}\int_0^T\int_{0}^{1}\phi_1(y)|V(t,x(t)-\eps y)-V(t,x(t)-)|\,dy\,dt\\
&&\leq \|\psi\|_{L^\infty}\|\phi_1\|_{L^\infty}\sup_{y\in(0,\eps)}\int_0^T|V(t,x(t)-y)-V(t,x(t)-)|\,dt.
\end{eqnarray*}
This converges to 0 for $\eps$ going to 0, thanks to the strong trace property.
Hence
\begin{equation}\label{eq-}
\lim_{\eps\to0}\int_0^\infty\int_\R \psi(t)\phi_\eps(y-x(t))V(t,y)\,dy\,dt=\int_0^\infty\psi(t)V(t,x(t)-)\,dt.
\end{equation}
In the same way, we show that
\begin{equation}\label{eq+}
\lim_{\eps\to0}\int_0^\infty\int_\R \psi(t)\phi_\eps(x(t)-y)V(t,y)\,dy\,dt=\int_0^\infty\psi(t)V(t,x(t)+)\,dt.
\end{equation}
And so
\begin{equation}\label{eq-+}
\lim_{\eps\to0}\int_0^\infty\int_\R \psi(t)(\phi_\eps(y-x(t))-\phi_\eps(x(t)-y))V(t,y)\,dy\,dt=\int_0^\infty\psi(t)[V(t,x(t)-)-V(t,x(t)+)]\,dt.
\end{equation}
\vskip0.5cm
\noindent 
We consider the test function in time and space 
$$\psi(t)\Phi_\eps(x-x(t)).$$
For the equation of Lemma \ref{defi_F} with $V\in\mV$, we get:
\begin{eqnarray*}
&&-\int_0^\infty\int_{\R}\psi(t)\Phi_\eps'(x-x(t))[x'(t)\eta(U(t,x) \midd V)-F(U(t,x),V)]\,dx\,dt\\
&&\qquad\qquad\geq-\int_0^\infty\int_{\R}\psi'(t)\Phi_\eps(x-x(t))\eta(U(t,x) \midd V)\,dx\,dt.
\end{eqnarray*}
Passing to the limit and using (\ref{eq-}), we find
$$
-\int_0^\infty \psi'(t)\int_{-\infty}^0\eta(U(t,x(t)+x) \midd V)\,dx\,dt\leq \int_0^\infty\psi(t)[x'(t)\eta(U(t,x(t)-) \midd V)-F(U(t,x(t)-),V)]\,dt.
$$
This is the desired result in the sense of distribution.
\vskip0.5cm
\noindent 
We consider now the test function in time and space 
$$\psi(t)\Phi_\eps(x(t)-x).$$
For the equation of Lemma \ref{defi_F} with $V\in\mV$, we get in the same way, using (\ref{eq+}),
$$
-\int_0^\infty \psi'(t)\int_0^{\infty}\eta(U(t,x(t)+x) \midd V)\,dx\,dt\leq \int_0^\infty\psi(t)[-x'(t)\eta(U(t,x(t)+) \midd V)+F(U(t,x(t)+),V)]\,dt.
$$
\vskip0.5cm
\noindent
We take, now, as a test function 
$$\psi(t)[\Phi_\eps(x-x(t))+\Phi_\eps(x(t)-x)-1].$$
Note that this function converges to 0 in $L^1$ when $\eps$ converges to 0.
For equation (\ref{system}), this gives
\begin{eqnarray*}
&&\int_0^\infty\int_{\R}\psi(t)[\Phi_\eps'(x-x(t))-\Phi_\eps'(x(t)-x)][x'(t)U(t,x)-A(U(t,x))]\,dx\,dt\\
&&\qquad\qquad=\int_0^\infty\int_{\R}\psi'(t)[\Phi_\eps(x-x(t))+\Phi_\eps(x(t)-x)-1]U(t,x)\,dx\,dt.
\end{eqnarray*}
The right hand side term converges to 0. Thanks to (\ref{eq-+}), the left hand side term 
converges to 
$$
\int_0^\infty\psi(t)[x'(t)(U(t,x(t)+)-U(t,x(t)-))-(A(U(t,x(t)+))-A(U(t,x(t)-)))]\,dt=0.
$$
This provides the first equality in the sense of distribution. The second one can be proven the same way from (\ref{entropie}).
%\vskip0.5cm 
%Consider now 
 %$$
%\phi_\eps(y)=\frac{1}{\eps}\phi_1(y/\eps),
%$$ 
%where $\phi_1$ nonnegative, compactly supported in $(0,1)$ and such that $\int\phi_1\,dx=1$. 
%We consider again
%$$
%\Phi_\eps(x)=\int_x^\infty\phi_\eps(y)\,dy.
%$$
%Note that $\Phi_\eps$ is bounded by 1, equal to 1 for $x\leq0$ and equal to 0 for $x\geq \eps$.
%Consider again the test function in time and space 
%$$\psi(t)\Phi_\eps(x-x(t)).$$
%For the equation \ref{defi_F} with $V=U_L$, we get:
%\begin{eqnarray*}
%&&\int_0^\infty\int_{\R}\psi(t)\Phi_\eps'(x-x(t))[x'(t)\eta(U(t,x)|U_L)-F(U(t,x),U_L)]\,dx\,dt\\
%&&\qquad\qquad=-\int_0^\infty\int_{\R}\psi'(t)\Phi_\eps(x-x(t))\eta(U(t,x)|U_L)\,dx\,dt.
%\end{eqnarray*}
%passing to the limit $\eps\to0$ and using the strong trace property gives the result. The last case
%can be proven in a similar way.
\end{proof}

\noindent {\bf Acknowledgment:} The second author was  partially supported by the NSF while completing this work.

\bibliography{LV_arXiv.bib}

\end{document}